\newtheorem{example}{\it Example}
\newtheorem{remark}{\it Remark}
\newcommand{\C}{\mathcal{C}}
\newcommand{\E}{\mathbb{E}}
\title
{Dynamic behavior of stochastic gene expression models in the presence of bursting}
\author{M.~C. Mackey\footnotemark[2]
\and M. Tyran-Kami\'nska\footnotemark[3] \and R. Yvinec\footnotemark[4]}
\begin{document}
\maketitle

\renewcommand{\thefootnote}{\fnsymbol{footnote}}

\footnotetext[2]{Departments of Physiology, Physics \& Mathematics, McGill University, 3655 Promenade Sir
William Osler, Montreal, QC, Canada, H3G 1Y6 ({\tt michael.mackey@mcgill.ca}). This research was supported by
Natural Sciences and Engineering Research Council of Canada.}

\footnotetext[3]{Institute of Mathematics, University of Silesia, Bankowa 14, 40-007 Katowice, Poland ({\tt
mtyran@us.edu.pl}). This research was supported by the Polish National Science Center grant no~N~N201~608240.}

\footnotetext[4]{Institut Camille Jordan, Universit\'{e} Claude Bernard Lyon 1, 43 boulevard du 11 novembre
1918, 69622 Villeurbanne cedex, France ({\tt yvinec@math.univ-lyon1.fr}). This research was supported by  the
Ecole Normale Superieure Lyon (ENS Lyon, France, RV).}

\renewcommand{\thefootnote}{\arabic{footnote}}

\pagestyle{myheadings} \thispagestyle{plain} \markboth{M.~C. MACKEY, M. TYRAN-KAMI\'NSKA, AND  R.
YVINEC}{DYNAMICS OF STOCHASTIC GENE EXPRESSION MODELS}

%

%
%

%
%



\begin{abstract}
This paper considers the behavior of discrete and continuous mathematical models for gene expression in the
presence of transcriptional/translational bursting.  We treat this problem in generality with respect to the
distribution of the burst size as well as the frequency of bursting, and our results are applicable to both
inducible and repressible expression patterns in prokaryotes and eukaryotes.  We have given numerous examples of
the applicability of our results, especially in the experimentally observed situation that burst size is
geometrically or exponentially distributed.
\end{abstract}

\begin{keywords} analytical distributions, invariant density, piecewise deterministic Markov process\end{keywords}
\begin{AMS}
60J25, 60J28, 92C40
\end{AMS}

\section{Introduction}\label{sec:intro}

Recent spectacular advances in the ability of experimentalists to monitor the temporal behavior of single molecules
\cite{Elf2007,Golding2005,Ozbudak2002,Raj2006,Raj2009,Suter2011,Xie2008} inside cells has led to a quantum leap in our knowledge of their behavior as well as a plethora of data that challenge mathematicians.  These techniques are so refined that they allow the single molecule quantification of the transcription of mRNA as well as the translation of the mRNA into protein.  This visualization has shown that in many cases these transcription and translation processes occur in quantal bursts in which a few molecules are produced during a discrete period of time.  An analysis of the data obtained from such experiments  has given us many details of the nature of the bursting kinetics that are being used to guide mathematical modeling of these fascinating processes.

This paper utilizes the two main approaches that have been employed to model these bursting processes, i.e. a discrete formulation for the numbers of molecules \cite{Shahrezaei2008} or a continuous one \cite{Friedman2006,Mackey2011} and illustrates the common features of both as well as the differences.
Modeling (as opposed to simulation \cite{Gillespie1977,McAdams1997} which we do {\it not} consider) of the details of gene expression as a discrete Markov process  has an extensive literature (c.f \cite{Hornos2005,Innocentini2007-09-01,Peccoud1995,Raj2006,Ramos2007,Ramos2010,Shahrezaei2008,Thattai2001}) that has recently seen a flurry of activity.   The other approach that has received extensive attention is modeling of the process as a continuous one  and \cite{Friedman2006,Lipniacki2006,Mackey2011,Ochab-Marcinek2010,Shahrezaei2008a} are representative of these efforts.  The reader can consult \cite{Kepler2001} for an excellent expository account of the connection between these two approaches.

In the discrete Markov models, steady-state analytical solutions of  the master equation can often be obtained using the moment generating function.  For the continuous model formulations, one needs to solve the Fokker-Planck-like  equations,
sometimes using Laplace transforms. When solutions are not
available, moment equations can be derived and usually solved
\cite{Paulsson2004,Tapaswi1987}. Though continuous models have many analytic advantages over discrete ones, it is also the case that
information of potential importance may be lost in the continuous model formulation compared with the discrete formulation.

This paper presents a general one dimensional model for bursting gene expression  in both a discrete Markov
process formulation as well as a continuous situation.  Section \ref{sec:notation} presents some general
background material while Section \ref{sec:discrete} presents the discrete version of the bursting model.
Section \ref{ssec:gen-discrete} develops the general formulation of the discrete model while Section
\ref{ssec:disc-geo} deals with the special case in which the burst amplitudes are geometrically distributed.
Section \ref{sec:cont} develops the corresponding continuous model of the bursting expression, with a general
development in Section \ref{ssec:gen-cont} and Section \ref{ssec:exp-cont} devoted to the situation where the
burst amplitudes are exponentially distributed--a situation often found experimentally. Section
\ref{ssec:separable} concludes with an examination of a generalization of the exponential distribution of burst
amplitudes. The paper ends with some general observations in  Section \ref{sec:conclusions}.  Throughout the
paper, our results are illustrated with numerous examples.

\section{Notation and background}\label{sec:notation}

Let the triple $(E,\mathcal{E},m)$ be a $\sigma$-finite measure space and let $L^1=L^1(E,\mathcal{E},m)$ with
norm denoted by $\|\cdot\|_1$. A linear operator $P$ on $L^1$ is called \emph{substochastic} (\emph{stochastic})
if $Pu\ge 0$ and $\|Pu\|_1\le \|u\|_1$ ($\|Pu\|_1= \|u\|_1$) for all $u\ge 0$, $u \in L^1$. We denote by $D$ the
set of all \emph{probability densities} on $E$, i.e.
$$
D=\{u\in L^1: \,\, u\ge 0,\,\, \|u\|_1=1\},
$$
so that a stochastic operator transforms a density into a density. In the particular case of a countable set $E$
with $\mathcal{E}$ being the family of all subsets of $E$ and $m$ the counting measure,
 the space $L^1$ will be denoted by $\ell^1$. 

Let $\mathcal{P}\colon E\times \mathcal{E}\to[0,1]$ be a \emph{stochastic transition kernel}, i.e.
$\mathcal{P}(x,\cdot)$ is a probability measure for each $x\in E$ and the function $x\mapsto\mathcal{P}(x, B)$
is measurable for each $B\in\mathcal{E}$, and let $P$ be a stochastic operator on $L^1$. If
\begin{equation*}\label{eq:stkmo}
\int_B P u(x)m(dx)=\int_E \mathcal{P}(y,B)u(y)m(dy)\quad \text{for all } B\in \mathcal{E}, u\in D,
\end{equation*}
then $P$ is called the \emph{transition} operator corresponding to $\mathcal{P}$. A stochastic operator $P$ on
$L^1$ is called \emph{partially integral} or \emph{partially kernel} if there exists a measurable function
$p\colon E\times E\to[0,\infty)$ such that
$$
\int_E\int_E p(x,y)\,m(dy)\, m(dx) >0 \quad\text{and}\quad P u(x) \ge \int_E p(x,y)u(y)\, m(dy)
$$
for every density $u$. If, additionally,
\[
\int_E p(x,y)\,m(dx)=1,\quad  y\in E,
\]
then $P$ corresponds to the stochastic kernel
\[
\mathcal{P}(y,B)=\int_{B}p(x,y)\,m(dx),\quad y\in E, B\in \mathcal{E},\] and we simply say that $P$ has
\emph{kernel} $p$. Note that each stochastic operator on $\ell^1$ has a kernel.

We denote by $\mathcal{D}(A)$ the domain of a linear operator $A$. We say that $A\subseteq B$, or that $B$ is an
\emph{extension} of $A$, if $\mathcal{D}(A)\subseteq \mathcal{D}(B)$ and $B u=A u$ for $u\in \mathcal {D}(A)$.
The operator $A$ is said to be \emph{closable} if it has a closed extension.  If $A$ is closable, then the
closure $\overline{A}$ of $A$ is the minimal closed extension of $A$; more specifically, it is the closed
operator whose graph is the closure in $L^1\times L^1$ of the graph of $A$.  For an exposition of semigroup
theory we refer to \cite{engelnagel00}.

A  semigroup $\{P(t)\}_{t\ge0}$ of linear operators on $L^1$ is called \emph{substochastic} (\emph{stochastic})
if it is strongly continuous and for each $t>0$ the operator $P(t)$ is substochastic (stochastic). A density
$u^*$ is called \emph{invariant} or \emph{stationary} for $\{P(t)\}_{t\ge 0}$ if $u^*$ is a fixed point of each
operator $P(t)$, $P(t) u^*=u^*$ for every $t\ge 0$.

\begin{theorem}[{\cite[Theorem 2]{pichorrudnicki00}}]\label{thm:pr00}
Let  $\{P(t)\}_{t\ge 0}$ be a stochastic semigroup such that for some $t_0>0$ the operator $P(t_0)$ is partially
integral. If  the semigroup $\{P(t)\}_{t\ge 0}$ has only one invariant density $u^*$ and $u^*>0$ a.e. then
$$
\lim_{t\to\infty}\|P(t) u-u^*\|_1=0\;\;\text{for all}\;\;u\in D.
$$
\end{theorem}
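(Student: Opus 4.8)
The plan is to exploit the two defining features of a stochastic semigroup --- positivity and $L^1$-contractivity --- together with the minorization furnished by partial integrality. First I would record the elementary but crucial monotonicity: since each $P(t)$ is a positive $L^1$-contraction and the semigroup property gives $P(t+s)=P(t)P(s)$, for any two densities $u_1,u_2\in D$ the map $t\mapsto \|P(t)u_1-P(t)u_2\|_1$ is nonincreasing and hence convergent. Taking $u_2=u^*$ and using invariance $P(t)u^*=u^*$, it suffices to prove that this limit is $0$ for every pair $u_1,u_2\in D$; equivalently, writing $\|a-b\|_1=2\bigl(1-\|\min(a,b)\|_1\bigr)$ for densities, it suffices to show that the overlap $\|\min(P(t)u_1,P(t)u_2)\|_1\to 1$ as $t\to\infty$.

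Next I would manufacture a \emph{lower-bound function} from the kernel minorant. By hypothesis there is a measurable $p\ge 0$ with $\int_E\int_E p(x,y)\,m(dy)\,m(dx)>0$ and $P(t_0)u\ge \int_E p(\cdot,y)u(y)\,m(dy)$ for every density $u$. Applying this to the invariant density and using $u^*>0$ a.e.\ produces a function $h:=\int_E p(\cdot,y)u^*(y)\,m(dy)$ satisfying $0\le h\le P(t_0)u^*=u^*$ and $\|h\|_1>0$. The strict positivity of $u^*$ is exactly what guarantees $h\not\equiv 0$, and it also identifies the region in which trajectories can accumulate mass.

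The heart of the argument is to promote $h$ to a genuine lower bound for \emph{every} trajectory, that is, to show $\lim_{t\to\infty}\|(P(t)u-h)^-\|_1=0$ for all $u\in D$. Once this is in hand, the Lasota--Mackey lower-bound theorem applies: a stochastic semigroup possessing a nontrivial lower-bound function satisfies $\|P(t)u_1-P(t)u_2\|_1\to 0$ for all densities, which combined with the existence of $u^*$ yields the claimed convergence $\|P(t)u-u^*\|_1\to 0$. To obtain the lower bound I would use partial integrality to inject a fixed positive amount of mass into the support of $h$ after each elapsed time $t_0$, and then use the uniqueness of the invariant density as an irreducibility statement: were the overlap to fail to spread, the Hopf--ergodic decomposition of the semigroup would split $E$ into distinct invariant pieces carrying distinct invariant densities, contradicting uniqueness. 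Equivalently, uniqueness of $u^*$ together with $u^*>0$ eliminates the sweeping alternative of the Foguel dichotomy for partially integral semigroups, leaving asymptotic stability as the only possibility.

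The main obstacle is precisely this last step, namely converting the \emph{local} and merely nonzero minorization encoded in partial integrality --- the kernel $p$ need only be bounded below on a set of positive measure, not uniformly on all of $E$ as in Doeblin's condition --- into a \emph{global} spreading of overlap. One must show that under the dynamics the mass of an arbitrary initial density is eventually driven into $\operatorname{supp}h$, and here both the strict positivity of $u^*$, which prevents trajectories from being trapped away from $\operatorname{supp}h$, and its uniqueness, which prevents a nontrivial invariant decomposition, are indispensable; dropping either hypothesis makes the conclusion fail.
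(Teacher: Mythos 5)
First, a point of comparison: the paper does not prove this theorem at all---it is quoted verbatim from \cite[Theorem~2]{pichorrudnicki00}---so your attempt must be judged on its own merits against the proof in that reference. The preparatory reductions you make are all correct: the monotonicity of $t\mapsto\|P(t)u_1-P(t)u_2\|_1$ for a positive contraction semigroup, the overlap identity $\|u_1-u_2\|_1=2\bigl(1-\|\min(u_1,u_2)\|_1\bigr)$ for densities, the construction of $h=\int_E p(\cdot,y)u^*(y)\,m(dy)$ with $0\le h\le P(t_0)u^*=u^*$ and $\|h\|_1>0$ (this is indeed where $u^*>0$ a.e.\ enters), and the reduction to the Lasota--Mackey lower-bound-function theorem. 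This is also the right general shape of argument.

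The genuine gap is exactly the step you flag as the heart, namely proving $\lim_{t\to\infty}\|(P(t)u-h)^-\|_1=0$ for every density $u$, and neither mechanism you offer closes it. The ``mass injection'' fails because $P(t_0)u\ge\int_E p(\cdot,y)u(y)\,m(dy)$ yields nothing when $u$ is supported off the set $\{y:\int_E p(x,y)\,m(dx)>0\}$, so no fixed amount of mass is delivered per time step. The ergodicity argument is false as stated: failure of overlap spreading does not manufacture invariant sets, because cyclic (periodic) behavior is fully compatible with ergodicity. Here is the decisive objection: your sketch never uses continuity of the time parameter, yet the discrete-time analogue of the theorem is \emph{false}. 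Take $E=\{1,2\}$ with counting measure and the swap $Pu=(u_2,u_1)$; this operator is fully integral with kernel $p(x,y)=1_{\{x\neq y\}}$, has the unique strictly positive invariant density $u^*=(\tfrac12,\tfrac12)$, and there are no nontrivial invariant sets---yet $P^nu$ oscillates for $u=(1,0)$, and with your $h$ (which here equals $u^*$) one has $\|(P^nu-h)^-\|_1=\tfrac12$ for all $n$. Since every ingredient you invoke (contraction monotonicity, the overlap identity, the lower-bound theorem, the construction of $h$) is equally available for the discrete semigroup $\{P^n\}$, your argument would ``prove'' this false statement, so it cannot be completed as sketched. Likewise, appealing to a ``Foguel dichotomy (asymptotic stability or sweeping) for partially integral semigroups'' is circular: in the presence of an invariant density that dichotomy \emph{is} the theorem being proved. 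The missing ingredient is precisely how \cite{pichorrudnicki00} spend their effort: a decomposition theorem for partially integral semigroups together with a $0$--$2$-law-type argument exploiting all real times $t>0$ simultaneously to exclude the cyclic alternative, which is what makes the continuous-time statement true while its single-operator analogue is not.
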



\section{A discrete bursting model formulated
as a Markov process}\label{sec:discrete}
This section considers bursting gene expression as a Markov process.

\subsection{The general case}\label{ssec:gen-discrete}
In this section we model the number of gene products as a pure-jump Markov process $X=\{X(t)\}_{t\ge 0}$ in the
state space $E=\{0,1,2,\ldots\}$. Thus a master equation governs the dynamics evolution of  probabilities. A
general one-dimensional bursting gene expression model \cite{Kepler2001} may be constructed as follows.  Let $n$
be the number of gene products and $P_n(t)=\Pr(X(t)=n)$ denote the probability of finding $n$ gene products
inside the cell at a given time $t$. We shall include a loss ($n \rightarrow n-1$) and gain ($n \rightarrow
n+k$) of functional processes in terms of the general rates $\gamma_n$ and $\lambda_n$, respectively. The step
size assumes the values $k=1, 2, \ldots$ and is a random variable (independent of the actual number of gene
products) with probability density function $h$, so that $\sum_{k=1}^{+\infty} h_k = 1.$ Therefore, our general
master equation describing the time evolution of the probabilities $P_n$ to have $n$ gene products in a cell is
an infinite set of differential equations
\begin{equation}
\label{eq:discret}
 \frac{dP_n}{dt}=\gamma_{n+1}P_{n+1}-\gamma_nP_n +
\displaystyle \sum_{k=1}^nh_k\lambda_{n-k}P_{n-k}-\lambda_nP_n, \quad n=0,1,\ldots,
\end{equation}
where we use the convention that $\sum_{k=1}^0=0$. We supplement~\eqref{eq:discret} with the initial condition
$P_n(0)=v_n$, $n=0,1,\ldots$, where $v=(v_n)_{n\ge 0}\in \ell^1$ is a probability density function of the
initial amount $X(0)$ of the gene product. In the following paragraphs, we consider the existence and uniqueness
of solutions of \eqref{eq:discret} together with convergence to a stationary distribution and then summarize our
results in Theorem \ref{t:exst}.

Assume that
\begin{equation}\label{a:con}
\lambda_0>0,\quad \gamma_0=0,\quad \gamma_n>0,\quad \lambda_n,h_n\ge 0,\quad n=1,2,\ldots, \quad
\sum_{n=1}^{+\infty} h_n = 1.
\end{equation}
The process $X$ is the minimal pure jump Markov process  with the jump rate function
$\varphi(n)=\lambda_n+\gamma_n, n\ge 0$, and the jump transition kernel $\mathcal{K}$ given by
\begin{equation}\label{d:opK}
\mathcal{K}(n,\{n+j\})=\left\{
               \begin{array}{ll}
                 q_n, & \text{if } j=-1, n\ge 1,  \\
                 (1-q_n) h_j, & \text{if } j\ge 1, n\ge 0, \\
                 0, & \text{otherwise},
               \end{array}
             \right.\quad q_n=\frac{\gamma_n}{\lambda_n+\gamma_n}.
\end{equation}
First, we recall the construction of $X$. Let $\{\xi_k\}_{k\ge 0}$, be a discrete time Markov chain in the state
space $E=\mathbb{Z}_+=\{0,1,\ldots\}$ with transition kernel $\mathcal{K}$ and let $\{\varepsilon_k\}_{k\ge 1}$
be a sequence of independent random variables, exponentially distributed with mean~$1$. Set $T_0=0$ and define
recursively the times of jumps of $X$ as
\[
T_k=T_{k-1}+\frac{\varepsilon_k}{\varphi(\xi_{k-1})},\quad k=1,2,\ldots.
\]
Starting from $X(0)=\xi_0$ we have
\[
X(t)=\xi_k,\quad T_k\le t<T_{k+1},\quad k=0,1,2,\ldots,
\]
so that the process is uniquely determined for all $t<T_\infty$, where  \[T_\infty=\lim_{k\to\infty} T_k\] is
called the explosion time. If the explosion time is finite, we can add the point $-1$ to the state space and we
can set $X(t)=-1$ for  $t\ge T_{\infty}$. The process $X$ is called \emph{nonexplosive}  if
$\mathbb{P}_i(T_\infty=\infty)=1$ for all $i\in E$, where $\mathbb{P}_i$ is the law of the process starting from
$X(0)=i$.
In particular, if the chain $\{\xi_k\}_{k\ge 0}$ is  recurrent, then $X$ is nonexplosive.

We now rewrite  equation \eqref{eq:discret} as an abstract Cauchy problem in the space $\ell^1$. We make use of
the results from~\cite{tyran09}. Let $K$ be the transition operator on $\ell^1$ corresponding to $\mathcal{K}$
defined as in~\eqref{d:opK}. For $v=(v_n)_{n\ge 0}\in \ell^1$ we have $(Kv)_0=q_1v_{1}$ and
\begin{equation*}
(Kv)_n=q_{n+1}v_{n+1}+\sum_{k=1}^n h_{k}(1-q_{n-k})v_{n-k}, \quad n=1,2,\ldots.
\end{equation*}
Define  the operator
\begin{equation*}
Gu=-\varphi u +K(\varphi u)\quad \text{for}\quad u\in \ell^1_\varphi=\{u\in\ell^1:\sum_{n=0}^\infty
\varphi_n|u_n|<\infty\}.
\end{equation*}
There is a substochastic semigroup $\{P(t)\}_{t\ge 0}$ on $\ell^1$ such that for each initial probability
density function $v\in \ell^1_\varphi$ the equation
\begin{equation}\label{e:dcp}
\frac{d u}{dt}=G(u), \quad t>0, \quad u(0)=v,
\end{equation}
has a nonnegative solution $u(t)$ which is given by $u(t)=P(t)v$ for $t\ge 0$ and
\[
(P(t)v)_n=\sum_{j=0}^\infty \mathbb{P}_j(X(t)=n,t<T_\infty)v_j,\quad n=0,1,\ldots.
\]
The process $X$ is nonexplosive  if and only if the semigroup $\{P(t)\}_{t\ge 0}$ is stochastic. Equivalently,
the generator of the semigroup $\{P(t)\}_{t\ge 0}$ is the closure of $(G,\ell^1_\varphi)$. In that case the
solution $u(t)$ of~\eqref{e:dcp} is unique and it is a probability density function for each $t$, if $v$ is has
these properties.

The equation for the steady state $p^*=(p^*_n)_{n\ge 0}$ of \eqref{eq:discret} is of the form
\begin{equation}\label{e:stdis}
\gamma_{n+1}p^*_{n+1}-\gamma_np^*_n + \displaystyle \sum_{k=1}^nh_k\lambda_{n-k}p^*_{n-k}-\lambda_np^*_n=0,
\quad n=0,1,\ldots.
\end{equation}
Observe that $\gamma_1p^*_{1}=\lambda_0 p^*_{0}$ and that we can rewrite~\eqref{e:stdis} as
\[
\gamma_{n+1}p^*_{n+1}-\gamma_np^*_n=\lambda_np^*_n-\sum_{k=0}^{n-1}h_{n-k}\lambda_{k}p^*_{k}, \quad n=1,2\ldots.
\]
Hence
\[
\gamma_{n+1}p^*_{n+1}= \sum_{j=0}^n\lambda_jp^*_j-\sum_{j=1}^n\sum_{k=0}^{j-1}h_{j-k}\lambda_{k}p^*_{k}
\]
 and changing the order of
summation, we obtain
\begin{equation}\label{e:stdist}
p^*_{n+1}=\frac{1}{\gamma_{n+1}}\sum_{k=0}^{n}\overline{h}_{n-k}\lambda_k p^*_k,\quad n=0,1,\ldots,
\end{equation}
where
\[
\overline{h}_{l}=\sum_{j=l+1}^\infty h_j, \quad l\ge 0. \]
 Thus given $p^*_{0}$, equation~\eqref{e:stdist} uniquely determines $p^*$. Consequently, there is one, and
up to a multiplicative constant only one, solution of equation~\eqref{e:stdis}. If $p^*_{0}>0$ and either
$\overline{h}_l>0$ for all $l\ge 0$ or $\lambda_l>0$ for all $l\ge 1$, then $p^*_{n}>0 $ for all $n\ge 1$. Now,
if
\begin{equation}\label{e:exst}
\sum_{n=0}^\infty p^*_n=1\quad \text{and}\quad \sum_{n=0}^\infty (\lambda_n+\gamma_n)p^*_n<\infty,
\end{equation}
then  $p^*\in \ell^1_\varphi$, $G(p^*)=0$,  and $K(\varphi p^*)=\varphi p^*$, which implies that the semigroup
$\{P(t)\}_{t\ge 0}$ is stochastic. We have thus proved the following result, which is an analog of Theorem
\ref{thm:pr00} for the discrete bursting model.

\begin{theorem}\label{t:exst} Assume condition~\eqref{a:con} and
suppose that a strictly positive $p^*=(p^*_n)_{n\ge 0}$ given by \eqref{e:stdist} satisfies~\eqref{e:exst}.
 Then for each initial probability density function
$v=(v_n)_{n\ge 0}\in \ell^1_\varphi$ equation~\eqref{eq:discret} has a unique solution which is a probability
density function for each $t>0$ and satisfies
\[
\lim_{t\to\infty}\sum_{n=0}^\infty |(P(t)v)_n-p_n^*|=0.
\]
\end{theorem}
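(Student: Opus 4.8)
The plan is to assemble the conclusion from two ingredients: the analytic facts already established in the discussion preceding the statement, which handle existence, uniqueness, and preservation of probability, and the abstract convergence criterion of Theorem \ref{thm:pr00}, which supplies the long-time behavior. The only genuinely new work lies in verifying the hypotheses of Theorem \ref{thm:pr00} for the present semigroup.

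First I would record that, by the computation leading to \eqref{e:stdist}, the hypothesis that the strictly positive $p^*$ satisfies \eqref{e:exst} yields $p^*\in\ell^1_\varphi$ together with $G(p^*)=0$ and $K(\varphi p^*)=\varphi p^*$. By the criterion quoted from \cite{tyran09}, this forces the substochastic semigroup $\{P(t)\}_{t\ge 0}$ to be in fact stochastic, equivalently $X$ to be nonexplosive with generator $\overline{(G,\ell^1_\varphi)}$. Stochasticity immediately gives the first half of the assertion: for a probability density $v\in\ell^1_\varphi$ the function $u(t)=P(t)v$ is the unique solution of \eqref{e:dcp}, hence of \eqref{eq:discret}, and remains a probability density for every $t$.

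To invoke Theorem \ref{thm:pr00} I would then check its three requirements. Stochasticity is already in hand. The partial-integral condition is automatic here: since $E$ is countable and $m$ is the counting measure, every stochastic operator on $\ell^1$ possesses a kernel, as noted in Section \ref{sec:notation}, so each $P(t_0)$ is partially integral. It remains to show that the semigroup admits a unique invariant density and that it is strictly positive. Strict positivity is the standing hypothesis on $p^*$, and $p^*$ is itself invariant because $p^*\in\ell^1_\varphi$ lies in the domain of the generator with $G(p^*)=0$, so that $P(t)p^*=p^*$ for all $t$. The crux is therefore uniqueness: I would argue that any invariant density $u^*$ must satisfy the stationary balance relations \eqref{e:stdis}, and since \eqref{e:stdis} determines its solution uniquely up to a multiplicative constant while invariant densities are normalized to sum to one, necessarily $u^*=p^*$. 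With these hypotheses verified, Theorem \ref{thm:pr00} yields $\lim_{t\to\infty}\|P(t)v-p^*\|_1=0$, which is the stated convergence.

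I expect the main obstacle to be exactly this identification of invariant densities with solutions of \eqref{e:stdis}. A priori an invariant density $u^*$ belongs only to the domain of the closure $\overline{(G,\ell^1_\varphi)}$ and need not lie in $\ell^1_\varphi$, so one cannot directly assert $G(u^*)=0$ coordinate by coordinate. I would bridge this gap either through the resolvent of the generator, extracting the balance equations \eqref{e:stdis} in the limit, or by appealing to the positive recurrence of $X$ that is implicit in the summability of the strictly positive $p^*$, which guarantees a unique stationary distribution by standard Markov chain theory. Everything else reduces to bookkeeping already carried out in the text.
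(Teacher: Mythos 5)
Your proposal is correct and follows essentially the same route as the paper: the discussion preceding the theorem already establishes stochasticity from $p^*\in\ell^1_\varphi$, $G(p^*)=0$ and $K(\varphi p^*)=\varphi p^*$, and the convergence is exactly an application of Theorem~\ref{thm:pr00}, with partial integrality coming for free from the remark in Section~\ref{sec:notation} that every stochastic operator on $\ell^1$ has a kernel. The closure-domain subtlety you flag is genuine but the paper leaves it implicit, and in this discrete setting it closes more simply than either of your proposed bridges: if $u^{(m)}\to u^*$ in $\ell^1$ with $u^{(m)}\in\ell^1_\varphi$ and $Gu^{(m)}\to \overline{G}u^*=0$, then since each coordinate $(Gu)_n=\gamma_{n+1}u_{n+1}-\gamma_n u_n+\sum_{k=1}^n h_k\lambda_{n-k}u_{n-k}-\lambda_n u_n$ involves only the finitely many entries $u_0,\dots,u_{n+1}$ and coordinate evaluation is $\ell^1$-continuous, any invariant density satisfies \eqref{e:stdis} coordinatewise and hence equals $p^*$ by the uniqueness-up-to-constant already proved in the text.
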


\begin{remark}\label{r:mean}\em
 From~\eqref{e:stdist} it follows that
\[
\sum_{n=0}^\infty \gamma_{n+1}p^*_{n+1}=\sum_{n=0}^\infty\sum_{k=0}^{n}\left(\sum_{j=n-k+1}^\infty
h_j\right)\lambda_k p^*_k=\sum_{k=0}^\infty\sum_{n=k}^{\infty}\left(\sum_{j=n-k+1}^\infty h_j\right)\lambda_k
p^*_k.
\]
The mean value $\E(h)$ of the distribution $h$ can be represented as
\[
\E(h)=\sum_{j=0}^\infty jh_j=\sum_{n=0}^{\infty}\sum_{j=n+1}^\infty h_j.
\]
We thus obtain
\[
\E(h)=\sum_{n=k}^{\infty}\sum_{j=n-k+1}^\infty h_j
\]
for each $k\ge 0$. Combining these leads to
\[
\sum_{n=0}^\infty \gamma_{n+1}p^*_{n+1}=\E(h)\sum_{k=0}^\infty \lambda_k p^*_k.
\]
\end{remark}


\subsection{Bursting with a geometric distribution}\label{ssec:disc-geo}
Next, we give  sufficient conditions for~\eqref{e:exst} in the case when $h$ is geometric
\begin{equation}\label{e:geom}
h_k=(1-b)b^{k-1},\quad k=1,2,\ldots,
\end{equation}
with $b\in(0,1)$. Since
\[
\sum_{j=n-k+1}^\infty h_j=b^{n-k},
\]
we obtain the following equation for $p^*=(p^*_n)_{n\ge 0}$
\begin{equation}\label{eq:rec_bif}
\frac{p^*_{n+1}}{p^*_n}=\frac{\lambda_n+b\gamma_n}{\gamma_{n+1}},\quad n=0,1\ldots.
\end{equation}
Explicit stationary solutions in this case were recently obtained in \cite{Aquino11}. However, for $h$
geometric, we can go further and prove convergence to this stationary state with the following result which
follows from Theorem~\ref{t:exst} and Remark~\ref{r:mean}.

\begin{corollary} Assume that condition~\eqref{a:con} holds. Suppose that $h$ is geometric as in \eqref{e:geom}.
Then $p^{*}=(p^*_n)_{n\ge 0}$ is given by
\begin{equation}\label{e:stwheng}
p^*_n=p^*_0\prod_{k=1}^n\frac{\lambda_{k-1}+b\gamma_{k-1}}{\gamma_k}
=\frac{p^*_0\lambda_0}{\gamma_n}\prod_{k=1}^{n-1}\frac{\lambda_{k}+b\gamma_{k}}{\gamma_k},\quad n=1,2,\ldots.
\end{equation}
In particular, if
\begin{equation}\label{e:scge}
\limsup_{n\to\infty}\frac{\lambda_n}{\gamma_n}<1-b\quad \text{and}\quad \liminf_{n\to\infty} \gamma_n>0,
\end{equation}
then the conclusions of Theorem~\ref{t:exst} hold.
\end{corollary}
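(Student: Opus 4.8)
The plan is to first read off the closed form \eqref{e:stwheng} from the recursion and then verify that \eqref{e:scge} forces both conditions in \eqref{e:exst}, after which Theorem~\ref{t:exst} applies directly.

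First I would derive the product formula. For geometric $h$ one has $\sum_{j=n-k+1}^\infty h_j=b^{n-k}$, so \eqref{e:stdist} collapses to the one-step recursion \eqref{eq:rec_bif}, namely $p^*_{n+1}/p^*_n=(\lambda_n+b\gamma_n)/\gamma_{n+1}$. Telescoping this from $0$ to $n-1$ gives the first expression in \eqref{e:stwheng}. Since $\gamma_0=0$ by \eqref{a:con}, the $k=1$ factor is $\lambda_0/\gamma_1$; pulling it out and relabelling the remaining indices converts the product into the second expression in \eqref{e:stwheng}. Strict positivity (for any $p^*_0>0$) is then immediate, because $\lambda_0>0$ while $\gamma_k>0$ and $b>0$ give $\lambda_k+b\gamma_k\ge b\gamma_k>0$ for $k\ge1$; equivalently, $\overline h_l=b^l>0$ lets me invoke the positivity statement following \eqref{e:stdist}.

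The substantive step is summability. The clean quantity to estimate is $\gamma_n p^*_n$, which from the second form of \eqref{e:stwheng} equals $p^*_0\lambda_0\prod_{k=1}^{n-1}(\lambda_k/\gamma_k+b)$. Using $\limsup_n\lambda_n/\gamma_n<1-b$, I would fix $\theta$ with $\limsup_n\lambda_n/\gamma_n+b<\theta<1$ and an index $N$ beyond which $\lambda_k/\gamma_k+b\le\theta$; splitting the product at $N$ then yields a geometric bound $\gamma_n p^*_n\le C\theta^n$. From this the second requirement in \eqref{e:exst} follows at once: $\sum_n\gamma_n p^*_n\le C\sum_n\theta^n<\infty$, and since $\limsup_n\lambda_n/\gamma_n<\infty$ makes $\lambda_n/\gamma_n$ bounded by some $B$, also $\lambda_n p^*_n=(\lambda_n/\gamma_n)\gamma_n p^*_n\le BC\theta^n$ is summable. (Alternatively, once $\sum_n\lambda_n p^*_n<\infty$ is known, Remark~\ref{r:mean} gives $\sum_n\gamma_n p^*_n=\E(h)\sum_n\lambda_n p^*_n$ with $\E(h)=1/(1-b)<\infty$.)

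The first requirement $\sum_n p^*_n<\infty$ is where the second hypothesis in \eqref{e:scge} is essential: here I divide the estimate by $\gamma_n$, so that $p^*_n\le C\theta^n/\gamma_n$, and the factor $1/\gamma_n$ must be controlled. Since $\liminf_n\gamma_n>0$ there exist $\delta>0$ and $M$ with $\gamma_n\ge\delta$ for $n\ge M$, whence $p^*_n\le(C/\delta)\theta^n$ for large $n$ and the series converges, allowing normalization to a probability density. With $p^*$ strictly positive and both conditions of \eqref{e:exst} verified, Theorem~\ref{t:exst} gives the asserted convergence. The main obstacle, and the reason both parts of \eqref{e:scge} are needed, is precisely this coupling: the limsup condition controls the product in $\gamma_n p^*_n$ but says nothing about the prefactor $1/\gamma_n$, and a too-rapidly-vanishing $\gamma_n$ would overwhelm the geometric decay and defeat normalizability.
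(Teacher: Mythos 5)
Your proposal is correct and follows essentially the same route as the paper: the product formula comes from telescoping the recursion \eqref{eq:rec_bif} (which the paper derives from \eqref{e:stdist} using $\overline{h}_{n-k}=b^{n-k}$), and the conditions \eqref{e:exst} are verified by a standard ratio-test/geometric bound so that Theorem~\ref{t:exst} applies, with your parenthetical use of Remark~\ref{r:mean} matching the paper's stated reliance on it. Your write-up simply fills in the quantitative details (the choice of $\theta$, the splitting of the product, and the role of $\liminf_n\gamma_n>0$ in controlling the $1/\gamma_n$ prefactor) that the paper leaves implicit.
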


\begin{example}\em Consider $\lambda_n$ to be  a Hill function of the form
\begin{equation}
\label{eq:nonlinear}
\lambda_n = \lambda \dfrac{1+\Theta n^N}{\Lambda + \Delta n^N}
\end{equation}
where  $\Lambda,\Delta, N>0$ and $\Theta\ge 0$.   
If $h$ is geometric and
\[
\liminf_{n\to\infty}\gamma_n>\frac{\lambda \Theta}{\Delta(1-b)},
\]
then condition \eqref{e:scge} holds.
\end{example}

\begin{remark}[Bifurcation in the discrete case]\label{rem:bif_disc}\em
Equation \ref{eq:rec_bif} can be used to examine the bifurcations in the stationary density, defined as changes in the number of maxima,
as a function of the model parameters. The number of maxima are linked to the number of sign changes of
\begin{equation}\label{eq:sign_fct}
n\mapsto \lambda_n+b\gamma_n-\gamma_{n+1}.
\end{equation}
In particular, $p^*$ has a maximum at $0$ if $ \lambda_0<\gamma_1$, and each successive sign change of
\eqref{eq:sign_fct} gives a maximum/minimum of $p^*$.
\end{remark}

We now provide examples for which the stationary distribution can be identified explicitly. In the following
examples we assume that $h$ is geometric with parameter $b$ as in \eqref{e:geom} and that $\gamma_n=\gamma n$,
$n\ge 1$, with $\gamma>0$.

\begin{example}[Negative binomial]\label{ex:nb}\em
Suppose that $\lambda_n=\lambda_0+\lambda n$ with $\lambda_0>0, \lambda\ge 0$. We have $\lambda_n\ge 0$ for each
$n$. Substituting $\gamma_k$ and $\lambda_k$ into \eqref{e:stwheng} gives
\begin{equation*}
p^*_n=\frac{p^*_0}{n!} \prod_{k=0}^{n-1}\left(\frac{\lambda_0}{b\gamma+\lambda}+k\right)
\left(\frac{\lambda+b\gamma}{\gamma}\right)^n,\quad n=0,1,\ldots.
\end{equation*}
Thus $p^*\in \ell^1$ if and only if
\[
\lambda +b\gamma <\gamma.
\]
In that case we obtain the negative binomial distribution
\begin{equation*}
\label{eq.negativebinom} p^*_n=\frac{(a)_n}{n!}p^n(1-p)^a, \quad n=0,1,\ldots,
\end{equation*} where
\begin{equation*}
p=\frac{\lambda+b\gamma}{\gamma},\quad a=\frac{\lambda_0}{b\gamma+\lambda},
\end{equation*}
and  $(a)_n$ is the Pochhammer symbol defined by
$$(a)_n=\frac{\Gamma(a+n)}{\Gamma(a)}=a(a+1)(a+2)\ldots(a+n-1), \quad(a)_0=1.$$
This was previously obtained in \cite{Shahrezaei2008}.
\end{example}

\begin{example}[Mixture of logarithmic distribution]\em
Suppose that $\lambda_0>0$ and $\lambda_n=0$ for $n\ge 1$. Then
\[
p^*_n=p^*_0\frac{\lambda_0}{\gamma}\frac{b^{n-1}}{n}, \quad
n=1,2,\ldots, 
\]
which can be rewritten as
\[
p^*_n=-\frac{b^{n}}{n\ln(1-b)}(1-p^*_0),\quad n=1,2,\ldots,\quad p^*_0=\frac{b\gamma}{b\gamma-\lambda_0
\ln(1-b)}.
\]
The distribution
\[
\tilde{p}_0=0,\quad \tilde{p}_n=-\frac{b^n}{n\ln(1-b)},\quad n=1,2,\ldots,
\]
is called a logarithmic distribution.

If we assume that $\lambda_n=0$ for $n> m$, then we obtain the following distribution
\[
p^*_n=p^*_0\frac{b^n}{n!}\prod_{k=0}^{n-1}\left(\frac{\lambda_k}{b\gamma}+k\right),\quad n=0,\ldots,m,
\]
and
\[
p^*_n= \frac{b^n}{cn} \left (1-\sum_{j=0}^mp^*_j \right ),\quad n> m,
\]
where $c$ and $p^*_0$ are such that
\[
c=\sum_{j=m+1}^\infty \frac{b^j}{j}\quad\text{and}\quad \sum_{j=0}^mp^*_j+p^*_m\frac{mc}{b^m}=1.
\]
In particular, this type of distribution will be obtained if we take $\lambda_0>0$, $\lambda<0$, and
\[
\lambda_n=\left\{
            \begin{array}{ll}
              \lambda_0+\lambda n, & \text{if } n\le -\lambda_0/\lambda,\\
              0, & \text{otherwise}.
            \end{array}
          \right.
\]
\end{example}

\begin{example}[Hypergeometric distributions]\label{ex:Gauss}\em
We now take
\begin{equation}
\label{eq:nonlinearN1} \lambda_n = \lambda \dfrac{1+\Theta n}{\Lambda + \Delta n}
\end{equation}
where $\lambda>0, \Lambda\ge 1, \Theta\ge \Delta$. We find that, for each $n$,
\[
\frac{\lambda_n+b\gamma n}{\gamma}=\frac{b(n+a_1)(n+a_2)}{n+b_1},
\]
where
\[
b_1=\frac{\Lambda}{\Delta},\quad a_1=\frac{1}{2}\left(\alpha-\beta\right),\quad
a_2=\frac{1}{2}\left(\alpha+\beta\right),
\]
and
\[
\alpha=\frac{\Lambda}{\Delta}+\frac{\lambda\Theta}{b\gamma\Delta},\quad \beta^2= \alpha^2-\frac{4\lambda}{
b\gamma\Delta}.
\]
 Since $\Lambda\ge 1$ and $\Theta\ge \Delta$, we can find a nonnegative $\beta$, thus $a_{2}\ge a_{1}> 0$.
 Consequently, the stationary distribution
is of the form
\begin{equation}
p^*_n=\frac{1}{{}_2F_1(a_1,a_2;b_1;b)}\frac{(a_1)_n(a_2)_n}{(b_1)_n }\frac{ b^n}{n!}, \quad n=0,1,\ldots,
\end{equation}
where ${}_2F_1$ is  Gauss' hypergeometric function
\[
{}_2F_1(a_1,a_2;b_1;x)=\sum_{n=0}^\infty \frac{(a_1)_n (a_2)_n}{(b_1)_n}\frac{x^n}{n!}.
\]
\end{example}

\begin{example}[Generalized hypergeometric distributions]\em
The generalized hypergeometric function ${}_pF_q$  is defined to be the real analytic  function on
$\mathbb{R}$ given by the
 series expansion
\[
{}_pF_q(a_1,\ldots,a_p;b_1,\ldots, b_q;x)=\sum_{n=0}^\infty \frac{(a_1)_n\ldots (a_p)_n}{(b_1)_n\ldots
(b_q)_n}\frac{x^n}{n!}.
\]
The negative binomial distribution in Example~\ref{ex:nb} for the case of $\lambda=0$ has the probability
generating function $s\mapsto {}_1F_0(a_1;b s)/{}_1F_0(a_1;b) $ with $a_1=\lambda_0/b\gamma$. The distribution
obtained in Example~\ref{ex:Gauss} has the probability generating function $$s\mapsto
\frac{{}_2F_1(a_1,a_2;b_1;b s)}{{}_2F_1(a_1,a_2;b_1;b)}. $$ Extending both of these examples we suppose that
$\lambda_n\ge 0$ is a rational function of $n$ satisfying
\[
\frac{\lambda_n+b \gamma n}{\gamma}=\frac{(n+a_1)\ldots(n+a_{q+1})b}{(n+b_1)\ldots(n+b_q)},\quad n=0,1,2,\ldots.
\]
Then  $p^*=(p^*_n)_{n\ge 0}$  has  the probability generating function
\[
\frac{{}_{q+1}F_q(a_1,\ldots,a_{q+1};b_1,\ldots, b_q; b s)}{{}_{q+1}F_q(a_1,\ldots,a_{q+1};b_1,\ldots, b_q;b)}.
\]
\end{example}

\section{Continuous bursting model}\label{sec:cont}

\subsection{The general case}\label{ssec:gen-cont}
In this section we consider a continuous state space version of the model presented in
Section~\ref{sec:discrete}, which is a piecewise deterministic Markov process (PDMP) $Y=\{Y(t)\}_{t\ge 0}$ with
values in $E=(0,\infty)$ where $Y(t)$ denotes the amount of the gene product in a cell at time $t$, $t\ge 0$. We
assume that protein molecules undergo degradation at a rate $\gamma$ that is interrupted by production at random
times
\[
t_1<t_2<\ldots
\]
occurring with intensity $\varphi$, and that both $\varphi$ and $\gamma$ depend on the current number of
molecules.
At each $t_k$ a random amount of protein molecules is produced, 
so that the process changes from $Y(t_k-)$ to $Y(t_k)=Y(t_k-)+e_k$, $k=1,2,\ldots$, where $\{e_k\}_{k\ge 1}$ is
a sequence of random variables such that
\[
\Pr(e_k\in B|Y(t_k-)=y)=\int_{B}h(x,y)dx,
\]
where $h$ is a  nonnegative measurable function satisfying
\begin{equation}\label{eq:h}
\int_{0}^\infty h(x,y)dx=1,\quad y>0.
\end{equation}
The time-dependent probability density function $u(t,x)$ is described by the continuous analog of the master
equation \cite{mackeytyran08,Mackey2011}
\begin{equation}
\label{eq:continuous}
 \dfrac{\partial u(t,x)}{\partial
t}=\dfrac{\partial ( \gamma(x)u(t,x))}{\partial x}-\varphi(x)u(t,x)+ \int_0^x\varphi(y)u(t,y)h(x-y,y)dy
\end{equation}
with the initial probability density $ u(0,x)=v(x),$ $x>0$.

 We assume that $\gamma$ is a continuous function such that
\begin{equation}\label{e:as}
\gamma(x)>0 \quad \text{for }x>0, \quad \int_{0}^{\delta}\frac{dx}{\gamma(x)}=+\infty,
\end{equation}
for some $\delta>0$ and that $\varphi$ is a nonnegative  measurable function with $\varphi/\gamma$ being locally
integrable on $(0,\infty)$ and satisfying
\begin{equation}\label{e:as1}
\int_{0}^{\delta}\frac{\varphi(x)}{\gamma(x)}dx=+\infty.
\end{equation}
 From \eqref{e:as} it follows that the differential equation
\begin{equation}
x'(t)=-\gamma(x(t)),\quad x(0)=x>0,
\end{equation}
has a unique solution which we denote by $\pi_tx$, $t\ge 0$, $x>0$. For each $x>0$ we have $\pi_t x>0$ for all
$t>0$ and $\pi_t x\to 0$ as $t\to\infty$. This and condition~\eqref{e:as1} give
\[
\int_{0}^t \varphi(\pi_sx)ds=\int_{\pi_t x}^{x}\frac{\varphi(y)}{\gamma(y)}dy\to\infty,\quad \text{as
}t\to\infty,
\]
which implies that the function
\begin{equation*}
t\mapsto 1-e^{-\int_0^t \varphi(\pi_sx)ds}
\end{equation*}
is a distribution function of a positive and finite random variable for every $x>0$.

We now recall the construction of the minimal piecewise deterministic Markov process~$Y$ (see
e.g.~\cite{davis84,davis93} or \cite{tyran09} for details). Let $\{\varepsilon_k\}_{k\ge 1}$ be a sequence of
independent random variables exponentially distributed with mean $1$, which is also independent of
$\{e_k\}_{k\ge 1}$.  Set $t_0=0$. For each $k=1,2,\ldots$ and given $Y(t_{k-1})$ the process evolves as
\begin{equation}\label{e:bj}
Y(t)=\left\{
      \begin{array}{ll}
        \pi_{t-t_{k-1}}Y(t_{k-1}), & t_{k-1}\le t< t_{k}, \\
       Y(t_{k-})+e_k , &  t=t_k,
      \end{array}
    \right.
\end{equation}
where $t_k=t_{k-1}+\Delta t_k$ and $\Delta t_k$ is a random variable such that
\begin{equation*}
\Pr(\Delta t_k\le t|Y(t_{k-1})=x)=1-e^{-\int_0^t\varphi(\pi_sx)ds},\quad t, x>0.
\end{equation*}
The random variable $\Delta t_k$ can be defined with the help of the exponentially distributed random variable
$\varepsilon_k$ through the equality in distribution
\begin{equation*}\label{e:jrf}
\varepsilon_k=\int_{0}^{\Delta t_k} \varphi(\pi_{s} Y(t_{k-1}))ds,
\end{equation*}
which can be rewritten as
\[
\varepsilon_k=Q(\pi_{\Delta t_k}Y(t_{k-1}))-Q(Y(t_{k-1})),
\]
where the non-increasing function $Q$ is given by
\begin{equation}
Q(x)=\int^{\bar{x}}_x \frac{\varphi(y)}{\gamma(y)}dy,
\end{equation}
and $\bar{x}=+\infty$, when the integral is finite or any $\bar{x}>0$ otherwise. Since $Y(t_{k}-)=\pi_{\Delta
t_k}Y(t_{k-1})$, we obtain the following stochastic recurrence equation for  $\{Y(t_k)\}_{k\ge 0}$
\begin{equation}\label{e:dcmj}
Y(t_{k})=Q^{-1}(Q(Y(t_{k-1}))+\varepsilon_{k})+e_{k}, \quad k=1,2,\ldots,
\end{equation}
where $Q^{-1}$ is the generalized inverse of $Q$, $Q^{-1}(r)=\sup\{x:Q(x)\ge r\}$. Consequently, $Y(t)$ is
defined by \eqref{e:bj} for all $t<t_\infty$, where $t_\infty=\lim_{k\to\infty}t_k$ is the explosion time. As in
the discrete state space we can extend the state space $E$ by adding the point $-1$ and define $Y(t)=-1$ for
$t\ge t_\infty$. Let
 $\mathbb{P}_x$ be the law of the process $Y$
starting at $Y(0)=x$ and denote by $\mathbb{E}_x$ the expectation with respect to $\mathbb{P}_x$.

\begin{remark}\em
Note that if condition \eqref{e:as1} holds (equivalently $Q(0)=\infty$) then the amount of the gene product
$\{Y(t_k)\}_{k\ge 0}$ at the jump times is a discrete time Markov process with transition probability function
given by
\[
\mathcal{K}(y,B)=\int_B k(x,y)dx,\quad B\in\mathcal{B}((0,\infty)),
\]
where
\begin{equation}\label{e:k}
k(x,y)=e^{Q(y)}\int_{0}^y 1_{(0,x)}(z)h(x-z,z)\frac{\varphi(z)}{\gamma(z)}e^{-Q(z)}dz,\quad x,y>0.
\end{equation}
 If $Q(0)<\infty$ then the random variable $\Delta t_1$ is infinite with positive probability, since we have
for any $x>0$
\[
\Pr(\Delta t_1=\infty |Y(0)=x)=\lim_{t\to\infty}\Pr(\Delta t_1>t |Y(0)=x)=e^{Q(x)-Q(0)}>0,
\]
which then forces the process $Y(t,\omega)$ starting form $Y(0,\omega)=x$ to be $\pi_t(x)$ for all $t$, if
$\omega$ is such that $\Delta t_1(\omega)=\infty$.
\end{remark}

In what follows we assume that \eqref{e:as} and \eqref{e:as1} hold. We rewrite equation \eqref{eq:continuous} as
an abstract Cauchy problem in $L^1$
\begin{equation}\label{e:acpd}
\frac{du}{dt}=\C u,\quad u(0)=v,
\end{equation}
where the operator
\begin{equation}\label{d:C}
\C u(x)=\dfrac{d( \gamma(x)u(x))}{d x}-\varphi(x)u(x)+ \int_0^x\varphi(y)u(t,y)h(x-y,y)dy
\end{equation}
is defined on the domain
\begin{equation}\label{d:dA0n}
\mathcal{D}(\C )=\{u\in L^1: \gamma u\in \mathrm{AC},\; (\gamma u)'\in L^1, \;\lim_{x\uparrow
\infty}(\gamma(x)u(x))=0,\; \varphi u\in L^1\},
\end{equation}
and $ \gamma u\in \mathrm{AC}$ means that the function $x\mapsto \gamma(x)u(x)$ is absolutely continuous.
From~\cite{mackeytyran08,tyran09} it follows that there is a substochastic semigroup $\{P(t)\}_{t\ge 0}$ on
$L^1$ such that for each initial density $v\in \mathcal{D}(\C )$ equation \eqref{e:acpd} has a nonnegative
solution $u(t)$ which is given by $u(t)=P(t)v$ for $t\ge 0$ and
\begin{equation}\label{eq:PB}
\int_0^\infty \mathbb{P}_x(Y(t)\in B, t<t_\infty) v(x)dx=\int_B P(t)v(x)dx
\end{equation}
for all Borel subsets $B$ of $(0,\infty)$. The semigroup $\{P(t)\}_{t\ge 0}$ is stochastic if and only if its
generator $(C,D(C))$ is the closure of the operator $(\C ,\mathcal{D}(\C ))$.

We first study the fixed points of the semigroup, showing that $\{P(t)\}_{t\ge 0}$ has no more that one
invariant density through
\begin{proposition}\label{p:uid}
The substochastic semigroup $\{P(t)\}_{t\ge 0}$ can have at most one invariant density.
\end{proposition}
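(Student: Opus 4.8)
The plan is to reduce the assertion to a uniqueness property for a single integral equation satisfied by every invariant density, and then to exploit the boundary singularity \eqref{e:as1} to pin that solution down up to a scalar. First I would characterize invariant densities analytically: if $u^*$ is an invariant density, then $u^*\in\mathcal{D}(\C)$ and $\C u^*=0$, where $\C$ is given by \eqref{d:C}. I would integrate this identity over $(x,\infty)$. The boundary term at infinity vanishes because $\lim_{x\uparrow\infty}\gamma(x)u^*(x)=0$ by \eqref{d:dA0n}, and after interchanging the order of integration in the resulting double integral and using the normalization \eqref{eq:h}, the two contributions $\int_x^\infty\varphi u^*$ cancel. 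This leaves the homogeneous Volterra identity
\[
\gamma(x)u^*(x)=\int_0^x\varphi(y)u^*(y)\,\overline{H}(x-y,y)\,dy,\qquad \overline{H}(w,y)=\int_w^\infty h(z,y)\,dz .
\]
Conversely, differentiating this identity returns $\C u^*=0$, so it is equivalent to invariance once the decay at infinity is imposed. Hence it suffices to show that this equation has a one-dimensional solution space in the admissible class.

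Next I would show that the displayed equation determines $f:=\gamma u^*$ up to a multiplicative constant. Written for $f$, it is a first-order linear integro-differential equation whose only singular point is the boundary $x=0$: there $\overline{H}(x-y,y)\to 1$ (by \eqref{eq:h}), while the coefficient $\varphi/\gamma$ fails to be integrable by \eqref{e:as1}. Away from $0$ the equation carries, locally, a one-dimensional solution space, and the Volterra (look-back) term $\int_0^x$ propagates this datum forward, so the entire solution is fixed by its leading behaviour as $x\downarrow 0$. The integrability constraints $u^*,\varphi u^*\in L^1$ then select a single admissible leading behaviour up to scale. Consequently any two invariant densities $u_1^*,u_2^*$ are proportional; since both integrate to $1$ they coincide, which is the claim.

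The main obstacle is precisely the non-integrable singularity of $\varphi/\gamma$ at the origin from \eqref{e:as1}. This is exactly what prevents a naive Picard/Gronwall uniqueness argument for the Volterra equation (which would otherwise force $f\equiv 0$) and is simultaneously what permits a nontrivial one-parameter family; controlling the solution through this singularity is the delicate point. To handle it rigorously I would pass to the variable $\tau=Q(x)$, where $Q$ is the non-increasing function introduced before the present statement, noting that $Q(0)=\infty$ under \eqref{e:as1} and that $Q(\pi_t x)-Q(x)=\int_0^t\varphi(\pi_s x)\,ds$, so that in the $\tau$ coordinate the flow runs at unit speed and the displayed identity becomes a renewal-type equation on $(0,\infty)$, for which uniqueness of the stationary solution up to a normalizing constant is classical. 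Alternatively, one can reduce the problem to uniqueness of the invariant density of the embedded jump chain with kernel $k$ from \eqref{e:k} and invoke its irreducibility, since from any state the process can be raised by a jump and lowered by the flow; this likewise yields at most one invariant density.
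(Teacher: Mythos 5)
Your proposal contains two genuine gaps, and the second one is fatal to the route you chose. First, the characterization step. You assert that any invariant density satisfies $u^*\in\mathcal{D}(\C)$ and $\C u^*=0$, with $\mathcal{D}(\C)$ as in \eqref{d:dA0n}, and then integrate over $(x,\infty)$ to obtain the Volterra identity \eqref{e:fpC}. But the proposition is stated for the \emph{substochastic} semigroup, before stochasticity is established, and as the paper notes after \eqref{eq:PB}, the generator $(C,\mathcal{D}(C))$ coincides with the closure of $(\C,\mathcal{D}(\C))$ \emph{only if} the semigroup is stochastic; in general it is a proper extension. Invariance therefore gives you only $Cu^*=0$ for this possibly larger operator, and you may not impose the decay $\gamma(x)u^*(x)\to 0$ at infinity or the other membership conditions of \eqref{d:dA0n} that your integration by parts requires. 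The paper sidesteps this entirely by characterizing invariance through the resolvent $Rv=\int_0^\infty e^{-t}P(t)v\,dt$, which is well defined with no knowledge of $\mathcal{D}(C)$.

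Second, and more seriously, your uniqueness argument for the homogeneous Volterra equation does not go through. There is no usable initial datum: every admissible solution has $\gamma(x)u^*(x)\to 0$ as $x\downarrow 0$ (this follows from \eqref{e:fpC} itself by dominated convergence), so "propagating the leading behaviour forward" determines nothing, and your fix via $\tau=Q(x)$ fails because the kernel $h(x-y,y)$ depends on the pre-jump state $y$: the time-changed equation is \emph{not} a convolution equation, so classical renewal uniqueness does not apply except in special cases such as $h(x,y)=h(x)$. Your fallback — irreducibility of the embedded chain with kernel \eqref{e:k} — is asserted, not proved, and is not guaranteed under the standing assumptions ($h$ is an arbitrary measurable family of densities and $\varphi$ is merely nonnegative and measurable away from the origin); moreover you would still need the correspondence between fixed points of $K$ and invariant densities of the semigroup, which is the content of Theorem~\ref{p:id} and again runs through the domain issue above. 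The paper's actual mechanism is different and more robust: it bounds the resolvent below by the explicit resolvent $R_1$ of the jump-free part $Au=(\gamma u)'-\varphi u$, and uses the downward flow together with \eqref{e:as1} to show that $Rv>0$ on a neighborhood of the origin for \emph{every} nonzero $v\ge 0$. Consequently, for distinct densities $u_1,u_2$, the images $R(u_1-u_2)^{+}$ and $R(u_1-u_2)^{-}$ are both strictly positive near $0$, forcing the strict inequality $\|Ru_1-Ru_2\|_1<\|u_1-u_2\|_1$, which rules out two fixed densities at once. Your intuition that the process "can be raised by a jump and lowered by the flow" is close in spirit, but the operative fact is common accessibility of a neighborhood of the origin — a positivity/overlap argument at the level of the resolvent — not irreducibility of the jump chain and not a renewal structure.
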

\begin{proof}
Recall that $u^*$ is an invariant density for the semigroup $\{P(t)\}_{t\ge0}$ if and only if it is an invariant
density for the resolvent operator
\[
Rv:=R(1,C)v=\int_0^\infty e^{-t}P(t)vdt.
\]
The operator $R$ is substochastic and it satisfies $ Rv\ge R_1v $ for any nonnegative $v\in L^1$ (see
\cite{mackeytyran08}), where
\[
R_1v(x)=\frac{1}{\gamma(x)}\int_{x}^\infty v(y) e^{Q(y)-Q(x)+\int_{x}^{y}\frac{1}{\gamma(z)}dz}dy,\quad x>0.
\]
Note that $R_1$ is the resolvent operator $R(1,A)$ of a substochastic semigroup  $\{S(t)\}_{t\ge 0}$ with
generator
\[
Au(x)=\dfrac{d( \gamma(x)u(x))}{d x}-\varphi(x)u(x), \quad u\in \mathcal{D}(\C ).
\]
Since for any two nonnegative and nonzero $v_1,v_2\in L^1$ we can find $c(v_i)>0$ such that
\[
\int_{c(v_i)}^\infty v_i(y)dy>0,\quad i=1,2,
\]
we obtain $Rv_i(x)>0$ for all $x<\min\{c(v_1),c(v_2)\}$, $i=1,2$. Now suppose that $u_1,u_2$ are densities such
that $u=u_1-u_2$ is nonzero. Then both $u^+=\max\{0,u\}$ and $u^{-}=\max\{0,-u\}$ are nonnegative and nonzero.
Thus,  $R(u^+)(x)>0$ and $R(u^-)(x)>0$ for $x<c$ and some $c>0$. We have
\[
|Ru(x)|=|R(u^+)(x)-R(u^{-})(x)|\le R(u^+)(x)+R(u^{-})(x)=R(|u|)(x),
\]
thus the inequality is strict on a set of positive measure, which implies that if $u_1-u_2\neq 0$ then
\[
\|Ru_1-Ru_2\|_1<\|R|u_1-u_2|\|_1\le \|u_1-u_2\|_1.
\]
Consequently, the operator $R$ can have at most one invariant density.
\end{proof}

Let $K$ be the transition operator on $L^1$ given by
\begin{equation}\label{e:oK}
Kv(x)=\int_{0}^\infty k(x,y)v(y)dy,\quad v\in L^1,
\end{equation}
where the kernel $k$ is as in \eqref{e:k}. Observe that
\begin{equation}\label{e:K}
Kv(x)=\int_{0}^{x} h(x-z,z)\frac{\varphi(z)}{\gamma(z)}e^{-Q(z)}\int_{z}^\infty v(y)e^{Q(y)}dydz.
\end{equation}
A mild condition on the transition operator $K$, in conjunction with Theorems 3.6 and 5.2 of~\cite{tyran09}, has
interesting consequences for $\{P(t)\}_{t\ge 0}$ as contained in the following result.
\begin{proposition}\label{p:2}
If the transition operator $K$ is mean ergodic, i.e. for any $v\in L^1$, $v\ge 0$ the sequence
\begin{equation*}
\frac{1}{n}\sum_{j=0}^{n-1} K^j v
\end{equation*}
is convergent in $L^1$, then the semigroup $\{P(t)\}_{t\ge 0}$ is stochastic.
\end{proposition}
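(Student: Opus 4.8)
The plan is to reduce the stochasticity of $\{P(t)\}_{t\ge0}$ to the vanishing of a single bounded function built from the iterates of a substochastic companion of $K$, and then to use mean ergodicity to force that vanishing. Write $A$ for the generator of the semigroup $\{S(t)\}_{t\ge0}$ introduced in the proof of Proposition~\ref{p:uid}, and set $Bv(x)=\int_0^x\varphi(y)v(y)h(x-y,y)\,dy$, so that $\C=A+B$ on $\mathcal D(\C)$. Comparing \eqref{e:K} with the formula for $R_1=R(1,A)$ shows that the operator $K$ of \eqref{e:oK} is $BR(0,A)$, while $K_1:=BR_1=BR(1,A)$ is a genuinely substochastic operator dominated by $K$, i.e. $0\le K_1v\le Kv$ for every density $v$; condition \eqref{e:as1} is exactly what makes $K$ itself stochastic.

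First I would record the honesty criterion that plays the role of Theorem~3.6 of \cite{tyran09}. Expanding the full resolvent as $R=R(1,C)=\sum_{n\ge0}R_1K_1^n$ and using the mass balance $\|R_1w\|_1=\|w\|_1-\|K_1w\|_1$ for $w\ge0$ (obtained by integrating $(I-A)R_1w=w$ and noting that $\int_0^\infty AR_1w=-\|\varphi R_1w\|_1=-\|K_1w\|_1$), the series telescopes to
\[
\|Ru\|_1=\|u\|_1-\lim_{n\to\infty}\|K_1^nu\|_1,\qquad u\ge0.
\]
Since $\{P(t)\}_{t\ge0}$ is stochastic precisely when $\|Ru\|_1=\|u\|_1$ for every density $u$, this identity reduces the whole problem to showing that $\lim_{n\to\infty}\|K_1^nu\|_1=0$ for all $u\ge0$.

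To bring in mean ergodicity I would pass to adjoints: $\|K_1^nu\|_1=\langle(K_1^*)^n\mathbf 1,u\rangle$, and the bounded functions $(K_1^*)^n\mathbf 1$ decrease to some $g$ with $0\le g\le\mathbf 1$ and $K_1^*g=g$, so the goal becomes $g\equiv0$. Because $K_1\le K$, the limit $g$ is $K^*$-subinvariant, $K^*g\ge K_1^*g=g$, hence $j\mapsto\langle g,K^ju\rangle$ is nondecreasing for every $u\ge0$. If $g\ne0$, choosing a density $u$ with $\langle g,u\rangle>0$ and averaging gives $\langle g,\tfrac1n\sum_{j<n}K^ju\rangle\ge\langle g,u\rangle>0$; mean ergodicity then produces an invariant density $u^*=\lim_n\tfrac1n\sum_{j<n}K^ju$ with $Ku^*=u^*$ and $\langle g,u^*\rangle>0$. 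On the other hand, invariance of $u^*$ together with $K_1^*g=g$ forces $\langle g,(K-K_1)u^*\rangle=\langle g,Ku^*\rangle-\langle g,u^*\rangle=0$, so $g\,(K-K_1)u^*=0$ almost everywhere.

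The hard part, and the step where Theorem~5.2 of \cite{tyran09} is really needed, is converting this last identity into $g\equiv0$. One must use the strict positivity of the kernel of $K-K_1=B\bigl(R(0,A)-R(1,A)\bigr)$, which comes from the strict discounting of the inter-jump time together with the positivity of $h$ and $\varphi/\gamma$ built into \eqref{e:k}, to conclude that $(K-K_1)u^*>0$ on a set charged by $u^*$; combined with the ergodic structure of the mean-ergodic operator $K$ this contradicts $g\,(K-K_1)u^*=0$ on $\{g>0\}$ unless $g$ vanishes. I expect the irreducibility bookkeeping here, namely ensuring that the support of $u^*$ genuinely meets $\{g>0\}$ in the possible presence of several ergodic components, to be the only delicate point; once $g\equiv0$ is established, the criterion of the second paragraph gives $\lim_n\|K_1^nu\|_1=0$ and hence the stochasticity of $\{P(t)\}_{t\ge0}$.
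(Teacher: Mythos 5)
Your argument is correct in substance, and it is worth noting that the paper itself contains no internal proof of this proposition: it is justified purely by citing Theorems 3.6 and 5.2 of \cite{tyran09}. What you have produced is essentially a self-contained reconstruction of that outsourced machinery, and every step of the reduction checks out. The identification $K=BR(0,A)$ agrees with \eqref{e:K} (and $K_1=BR(1,A)\le K$ holds because of the discount factor $e^{-\int_z^y dz/\gamma(z)}\le 1$; note the paper's displayed formula for $R_1$ in the proof of Proposition~\ref{p:uid} has a sign typo in this exponent, and your version is the correct one). The mass-balance identity $\|R_1w\|_1=\|w\|_1-\|K_1w\|_1$ is legitimate: the boundary terms of $\int A R_1 w$ vanish by \eqref{e:as} and the domain conditions, and $\|BR_1w\|_1=\|\varphi R_1w\|_1$ by \eqref{eq:h}, so the telescoped honesty criterion $\|Ru\|_1=\|u\|_1-\lim_n\|K_1^nu\|_1$ is exactly the Kato--Voigt characterization that Theorem 3.6 of \cite{tyran09} encapsulates. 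The passage to $g=\lim_n (K_1^*)^n\mathbf{1}$ with $K_1^*g=g$, the subinvariance $K^*g\ge g$, and the monotonicity of $j\mapsto\langle g,K^ju\rangle$ are all sound.

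The one place you stop short --- the ``irreducibility bookkeeping'' you expect to be delicate --- is in fact not delicate at all, and no ergodic decomposition or appeal to Theorem 5.2 of \cite{tyran09} is needed: your own monotonicity step already guarantees $\langle g,u^*\rangle\ge\langle g,u\rangle>0$, so the support of $u^*$ automatically charges $\{g>0\}$. To close the argument, observe from the explicit kernels that for any $0\le v\in L^1$ one has $\{Kv>0\}=\{(K-K_1)v>0\}$ up to null sets: writing $F(z)=\int_z^\infty v(y)e^{Q(y)}dy$ and $G(z)=\int_z^\infty v(y)e^{Q(y)}\bigl(1-e^{-\int_z^y d\tau/\gamma(\tau)}\bigr)dy$, the factor $1-e^{-\int_z^y d\tau/\gamma(\tau)}$ is strictly positive for every $y>z$, so $F(z)>0$ if and only if $G(z)>0$, and both operators integrate these against the same nonnegative weight $h(x-z,z)\varphi(z)\gamma(z)^{-1}e^{-Q(z)}$ as in \eqref{e:k}. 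Applied to $v=u^*$ with $Ku^*=u^*$, this gives $(K-K_1)u^*>0$ a.e.\ on $\{u^*>0\}$; combined with your identity $g\,(K-K_1)u^*=0$ a.e., it forces $g=0$ a.e.\ on $\{u^*>0\}$, hence $\langle g,u^*\rangle=0$, directly contradicting $\langle g,u^*\rangle>0$. Thus $g\equiv 0$, $\lim_n\|K_1^nu\|_1=\langle g,u\rangle=0$ by dominated convergence, and stochasticity follows from your second paragraph --- so your proof is complete once this two-line support computation replaces the speculative final step.
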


In particular, if $K$ has a strictly positive fixed point, i.e. there is $v^*$ such that $Kv^*=v^*$ and $v^*>0$
a.e., then $K$ is mean ergodic~\cite{kornfeldlin00}. Note that a mean ergodic stochastic operator has a nonzero
fixed point.

We now describe invariant densities for the semigroup $\{P(t)\}_{t\ge 0}$ with the help of fixed points of the
operator~$K$.
\begin{theorem}\label{p:id}
Let
\begin{equation*}
\overline{H}(x,y)=\int_{x}^\infty h(z,y)dz,\quad x>0.
\end{equation*}
Suppose that there is a nonnegative solution $u^*$ of the equation
\begin{equation}\label{e:fpC}
\gamma(x)u^*(x)=\int_{0}^x \overline{H}(x-y,y)\varphi(y)u^*(y)dy
\end{equation}
such that $\varphi u^*\in L^1$. Then  the function
\begin{equation}\label{e:fpK}
v^*(x)=\int_{0}^x h(x-y,y)\varphi(y)u^*(y)dy
\end{equation}
is a fixed point of the operator $K$ in $L^1$, where $K$ is as in~\eqref{e:K}. Moreover, if  $u^*\in L^1$ then
$u^*\in \mathcal{D}(\C )$ and $\C (u^*)=0$, where $\C$ is as in \eqref{d:C}.

Conversely, if the operator $K$ has a nonnegative fixed point $v^*\in L^1$ then the function
\begin{equation}\label{e:uv}
u^*(x):=\frac{1}{\gamma(x)}\int_x^\infty e^{Q(y)-Q(x)}v^*(y)dy
\end{equation}
is a solution of \eqref{e:fpC} and $\varphi u^*\in L^1$.
\end{theorem}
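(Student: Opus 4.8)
The plan is to show that the two formulas \eqref{e:fpK} and \eqref{e:uv} define \emph{mutually inverse} correspondences between nonnegative solutions $u^*$ of \eqref{e:fpC} with $\varphi u^*\in L^1$ and nonnegative fixed points $v^*\in L^1$ of $K$; once this correspondence is in place, both directions of the theorem read off from it. The single object linking the two formulas is a first–order linear ODE. Writing $g=\varphi u^*$ and $\Phi=\gamma u^*$, I would differentiate \eqref{e:fpC} using $\partial_x\overline{H}(x-y,y)=-h(x-y,y)$ together with the normalization $\overline{H}(0,y)=1$ from \eqref{eq:h}, obtaining $\Phi'=g-v^*$ with $v^*$ exactly as in \eqref{e:fpK}. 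Since $Q'=-\varphi/\gamma$, this is the same as $(\Phi e^{Q})'=-v^*e^{Q}$, which is the equation that \eqref{e:uv} solves.

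For the forward direction I would first record $v^*\in L^1$ with $\|v^*\|_1=\|g\|_1$, by Tonelli and $\int_0^\infty h(x-y,y)\,dx=1$. Next comes the boundary condition $\Phi(x)e^{Q(x)}\to0$ as $x\to\infty$: from \eqref{e:fpC}, $\Phi(x)=\int_0^x\overline{H}(x-y,y)g(y)\,dy\to0$ by dominated convergence, since $\overline{H}(x-y,y)\to0$ for fixed $y$ while $g\in L^1$, and $e^{Q(x)}$ stays bounded under \eqref{e:as}, \eqref{e:as1}. Integrating $(\Phi e^{Q})'=-v^*e^{Q}$ from $x$ to $\infty$ then gives precisely $\Phi(x)=e^{-Q(x)}\int_x^\infty e^{Q(y)}v^*(y)\,dy$, i.e.\ $u^*$ is recovered from $v^*$ via \eqref{e:uv}; feeding this back into the rewritten form \eqref{e:K} of $K$ yields $Kv^*=v^*$. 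If moreover $u^*\in L^1$, then $\Phi(x)=\int_0^x(g-v^*)\,dy$ with $g-v^*\in L^1$ shows $\gamma u^*\in\mathrm{AC}$ and $(\gamma u^*)'\in L^1$, the limit at infinity above gives the remaining condition in \eqref{d:dA0n}, so $u^*\in\mathcal{D}(\C)$, and $\C u^*=\Phi'-g+v^*=0$ by \eqref{d:C}.

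For the converse I would start from $Kv^*=v^*$ read through \eqref{e:K}: the definition \eqref{e:uv} makes the factor $(\varphi/\gamma)e^{-Q}\int_z^\infty v^*e^{Q}$ equal to $\varphi u^*$, so $v^*(x)=\int_0^x h(x-y,y)\varphi(y)u^*(y)\,dy$ is automatic and $v^*$ is of the form \eqref{e:fpK}. That $\varphi u^*\in L^1$ follows from a Tonelli computation whose inner integral telescopes: $\int_0^y(\varphi/\gamma)e^{-Q}\,dx=e^{-Q(y)}$ because $(\varphi/\gamma)e^{-Q}=(e^{-Q})'$ and $e^{-Q(0^+)}=0$ by \eqref{e:as1}, giving $\|\varphi u^*\|_1=\|v^*\|_1$. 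It then remains to verify \eqref{e:fpC}: both sides have derivative $g-v^*$, hence differ by a constant, which I would pin down by evaluation at $0^+$.

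I expect the $x\to0^+$ boundary analysis in the converse to be the main obstacle. The vanishing $\Phi(0^+)=\gamma(x)u^*(x)\to0$ cannot be read directly off \eqref{e:uv}, since the factor $\int_x^\infty e^{Q}v^*$ may diverge as $x\to0^+$. The clean route is to exploit that $(\varphi/\gamma)\Phi=\varphi u^*\in L^1$ near $0$ while \eqref{e:as1} forces $\int_0^\delta(\varphi/\gamma)=\infty$: a strictly positive limit $\Phi(0^+)$ would make $(\varphi/\gamma)\Phi$ non-integrable on $(0,\delta)$, a contradiction, and $\Phi\ge0$ excludes a negative value, so the constant is $0$ and \eqref{e:fpC} holds. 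The corresponding boundary point at infinity in the forward direction is softer, because $\overline{H}(x-y,y)\to0$ supplies the decay directly.
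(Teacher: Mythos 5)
Your proposal is correct, and its forward half is essentially the paper's proof: the same rewriting of $\int_0^x \overline{H}(x-y,y)\varphi(y)u^*(y)\,dy$ to get absolute continuity of $\gamma u^*$ and the identity $(\gamma u^*)'=\varphi u^*-v^*$ (equation \eqref{e:C0}), the same dominated-convergence argument for $\gamma(x)u^*(x)\to 0$ at infinity, the same integration of $(e^{Q}\gamma u^*)'=-e^{Q}v^*$ over $(x,\infty)$ to recover \eqref{e:uv} and read off $Kv^*=v^*$ from \eqref{e:K}, and the same verification of \eqref{d:dA0n}. (One small attribution slip: the boundedness of $e^{Q}$ near infinity comes simply from $Q$ being non-increasing and finite, not from \eqref{e:as1}, which governs the behavior at $0$; this does not affect the argument.) The genuine divergence is in the converse. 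The paper integrates \eqref{e:C0} over $(z,\infty)$ and splits the resulting double integral by Fubini, so that only the decay $\gamma(x)u^*(x)\to0$ as $x\to\infty$ (immediate from \eqref{e:uv} since $Q$ is non-increasing and $v^*\in L^1$) is needed, and the delicate behavior at $0^+$ never arises. You instead observe that both sides of \eqref{e:fpC} are locally absolutely continuous with the same a.e.\ derivative $\varphi u^*-v^*$ (the fixed-point identity supplying $v^*(x)=\int_0^x h(x-y,y)\varphi(y)u^*(y)\,dy$), hence differ by a constant $c=\lim_{x\to0^+}\gamma(x)u^*(x)\ge0$, which you kill by noting that $c>0$ would make $\varphi u^*=(\varphi/\gamma)\gamma u^*$ non-integrable near $0$ by \eqref{e:as1}, contradicting the Tonelli computation $\|\varphi u^*\|_1=\|v^*\|_1$ (which itself correctly uses $e^{-Q(0^+)}=0$, again \eqref{e:as1}, exactly where the paper is terse). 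Both routes are sound; the paper's buys freedom from any analysis at $x=0^+$, while yours is arguably more transparent about the role of \eqref{e:as1} and yields, as a by-product, the boundary fact $\gamma(x)u^*(x)\to0$ as $x\to0$ for fixed points of $K$, which the paper only establishes in the forward direction.
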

\begin{proof}
Let $u^*$ be a solution of \eqref{e:fpC} such that $\varphi u^*\in L^1$. Since
\[
\lim_{x\to\infty}1_{[y,\infty)}(x) \overline{H}(x-y,y)=0
\]
for each $y$ and $0\le 1_{[y,\infty)}(x) \overline{H}(x-y,y)\le 1$ for all $x,y$, we obtain
\[
\lim_{x\to\infty}\gamma(x)u^*(x)=\lim_{x\to\infty}\int_{0}^\infty 1_{[y,\infty)}(x)
\overline{H}(x-y,y)\varphi(y)u^*(y)dy=0,
\]
by the Lebesgue's dominated convergence theorem. Similarly, we conclude that
\[
\lim_{x\to 0}\gamma(x)u^*(x)=0.
\]
We have
\[
\int_{0}^x \overline{H}(x-y,y)\varphi(y)u^*(y)dy=\int_{0}^x \varphi(y)u^*(y)dy- \int_0^x\int_0^{x-y}h(z,y)dz
\varphi(y) u^*(y)dy.
\]
Thus, $\gamma u^*\in \mathrm{AC}$ and
\begin{equation}\label{e:C0}
\frac{d}{dx}(\gamma(x)u^*(x))=\varphi(x)u^*(x)-\int_0^x h(x-y,y)\varphi(y) u^*(y)dy.
\end{equation}
The functions $\varphi u^*$ and $v^*$ are integrable.  Consequently, if $u^*\in L^1$ then $u^*\in\mathcal{D}(\C
)$ and $\C(u^*)=0$. Since
\[
v^*(x)=\varphi(x)u^*(x)-\frac{d}{dx}(\gamma(x)u^*(x))=-e^{-Q(x)}\frac{d}{dx}(e^{Q(x)}\gamma(x)u^*(x)),
\]
we obtain
\[
\int_{z}^\infty v^*(x)e^{Q(x)}dx=-\int_{z}^\infty
\frac{d}{dx}(e^{Q(x)}\gamma(x)u^*(x))dx=e^{Q(z)}\gamma(z)u^*(z),
\]
which shows that $Kv^*(y)=v^*(y)$, by~\eqref{e:K}.

We now turn to the converse part. Suppose that $u^*$ is as in \eqref{e:uv}, where $v^*$ is a fixed point of $K$.
Since $Q$ is non-increasing and $v^*$ is integrable, we see that
\[
\lim_{x\to\infty}\gamma(x)u^*(x)=0
\]
and that $\varphi u^*\in L^1$. It is easily seen that $u^*$ satisfies equation \eqref{e:C0}. Integrating
equation~\eqref{e:C0} with respect to $x$ from $z$ to $\infty$ leads to
\[
\int_{z}^\infty\frac{d}{dx}(\gamma(x)u^*(x))dx=\int_{z}^\infty\varphi(x)u^*(x)dx-\int_{z}^\infty \int_0^x
h(x-y,y)\varphi(y) u^*(y)dydx
\]
and changing the order of integration in the last integral gives
\[
\begin{split}
\int_{z}^\infty \int_0^x h(x-y,y)\varphi(y) u^*(y)dydx&=\int_{0}^z \int_z^\infty  h(x-y,y)\varphi(y) u^*(y)dxdy\\
&\quad +\int_{z}^\infty \int_y^\infty  h(x-y,y)\varphi(y) u^*(y)dxdy.
\end{split}.
\]
We have
\[
\int_{0}^z \int_z^\infty  h(x-y,y)\varphi(y) u^*(y)dxdy = \int_{0}^z   \overline{H}(z-y,y)\varphi(y) u^*(y)dy
\]
and
\[
\int_{z}^\infty \int_y^\infty  h(x-y,y)\varphi(y) u^*(y)dxdy
=\int_{z}^\infty \varphi(y) u^*(y)dy.
\]
Combining these we conclude that $u^*$ satisfies \eqref{e:fpC}.
\end{proof}

The following theorem guarantees that $\{P(t)\}_{t\ge 0}$ is stochastic and its strong convergence to a unique
stationary density $u^*$ that is given explicitly.

\begin{theorem}\label{thm:4}
Suppose that the operator $K$ as in \eqref{e:K} has an invariant density $v^*>0$ a.e. and let
\[
c:=\int_{0}^\infty \frac{1}{\gamma(x)}\int_x^\infty e^{Q(y)-Q(x)}v^*(y)dydx<\infty.
\]
Then the semigroup $\{P(t)\}_{t\ge 0}$ is stochastic and for each initial density
 $v$ we have
\begin{equation*}
\lim_{t\to\infty}\|P(t)v-u^{*}\|_1=0,
\end{equation*}
where
\[
u^*(x)=\frac{1}{c\gamma(x)}\int_x^\infty e^{Q(y)-Q(x)}v^*(y)dy
\]
is the unique stationary density of $\{P(t)\}_{t\ge 0}$.
\end{theorem}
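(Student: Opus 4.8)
The plan is to assemble the three structural results already in hand---uniqueness of the invariant density (Proposition~\ref{p:uid}), the passage from mean ergodicity of $K$ to stochasticity of the semigroup (Proposition~\ref{p:2}), and the correspondence between fixed points of $K$ and zeros of $\C$ (Theorem~\ref{p:id})---and then to invoke the Pichor--Rudnicki criterion (Theorem~\ref{thm:pr00}). First I would observe that, by hypothesis, $v^*>0$ a.e.\ is a fixed point of $K$, so $K$ is mean ergodic by~\cite{kornfeldlin00}; Proposition~\ref{p:2} then shows that $\{P(t)\}_{t\ge 0}$ is stochastic, and hence that its generator $C$ is the closure of $(\C,\mathcal{D}(\C))$.

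Next I would construct the invariant density. Writing $w$ for the nonnegative function on the right-hand side of~\eqref{e:uv}, the finiteness assumption $c<\infty$ says exactly that $\|w\|_1=c$, so $u^*=w/c$ is a density. By the converse part of Theorem~\ref{p:id} applied to $v^*$, the function $w$ solves~\eqref{e:fpC} and $\varphi w\in L^1$; by linearity $u^*$ solves the same homogeneous equation and $\varphi u^*\in L^1$, and since $u^*\in L^1$ the forward part of Theorem~\ref{p:id} yields $u^*\in\mathcal{D}(\C)$ with $\C u^*=0$. Because $C$ extends $\C$, we get $Cu^*=0$ and therefore $P(t)u^*=u^*$ for all $t$, so $u^*$ is an invariant density. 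Its positivity is immediate from the formula: for each $x>0$ the inner integral $\int_x^\infty e^{Q(y)-Q(x)}v^*(y)\,dy$ is strictly positive because $v^*>0$ a.e.\ on $(x,\infty)$ and $\gamma(x)>0$, so $u^*>0$ everywhere; uniqueness is precisely Proposition~\ref{p:uid}.

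It then remains to verify the partial-integrality hypothesis of Theorem~\ref{thm:pr00}, namely that $P(t_0)$ is partially integral for some $t_0>0$, and this is the main obstacle as well as the only genuinely new ingredient. The idea is to exploit the jump structure of the PDMP encoded in~\eqref{eq:PB} together with the jump kernel~\eqref{e:k}. Starting from $x$, at least one production event occurs before time $t$ with the strictly positive probability $1-e^{-\int_0^{t}\varphi(\pi_s x)\,ds}$, positivity holding for every $x$ precisely because $Q(0)=\infty$ under~\eqref{e:as1}; once a jump occurs, the post-jump location acquires the density $h(\cdot,\cdot)$ and is subsequently transported by the flow $\pi$, so the contribution of trajectories that have jumped at least once is absolutely continuous. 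Concretely, conditioning on the time $s$ and the pre-jump state of the first jump and performing the change of variables induced by $e\mapsto\pi_{t-s}(\pi_s y+e)$, one produces a measurable kernel $p_t(x,y)\ge 0$ with $P(t)v(x)\ge\int_0^\infty p_t(x,y)v(y)\,dy$ and $\int_0^\infty\int_0^\infty p_t(x,y)\,dx\,dy>0$, which is the definition of partial integrality.

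With stochasticity, a unique strictly positive invariant density $u^*$, and partial integrality of $P(t_0)$ all established, Theorem~\ref{thm:pr00} applies directly and delivers $\lim_{t\to\infty}\|P(t)v-u^*\|_1=0$ for every density $v$, completing the argument. The routine analytic points I would still have to check are the measurability and the positivity of the mass of the kernel $p_t$, and the interchange of integration orders in the change-of-variables computation; the conceptual content is entirely carried by the partial-integrality construction feeding into the Pichor--Rudnicki theorem.
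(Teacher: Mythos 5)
Your proposal is correct and follows essentially the same route as the paper's proof: mean ergodicity of $K$ from its strictly positive fixed point (via \cite{kornfeldlin00}) feeding into Proposition~\ref{p:2} for stochasticity, Theorem~\ref{p:id} plus the normalization $c<\infty$ for the invariant density, Proposition~\ref{p:uid} for uniqueness and positivity, and then Theorem~\ref{thm:pr00} for the convergence. The only difference is at the partial-integrality step: where you sketch the first-jump-decomposition construction of the kernel $p_t$, the paper merely checks that $\int_0^\infty\int_0^\infty 1_{(0,x)}(y)h(x-y,y)\varphi(y)\,dy\,dx>0$ (using that \eqref{e:as1} forces $\varphi>0$ near $0$) and cites \cite{mackeytyran08} for the existence of a $t$ with $P(t)$ partially integral---your sketch is precisely the argument behind that citation.
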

\begin{proof} By Proposition~\ref{p:2}, the semigroup $\{P(t)\}_{t\ge0}$ is stochastic. From Theorem~\ref{p:id} it
follows that $u^*\in\mathcal{D}(\C )$ and $\C(u^*)=0$. Thus, $u^*$ is an invariant density for the stochastic
semigroup $\{P(t)\}_{t\ge 0}$ and it is unique, by Proposition~\ref{p:uid}. Since $v^*(x)>0$ for a.e. $x>0$, we
conclude that $u^*(x)>0$ for all $x>0$. From assumptions~\eqref{e:as} and~\eqref{e:as1} it follows that there is
a $\delta_0$ such that $\varphi(y)>0$ for $y\in (0,\delta_0)$. This and \eqref{e:fpK} imply that
\[
\int_{0}^\infty \int_{0}^\infty p(x,y)\varphi(y) dy dx>0,\quad \text{where}\quad p(x,y)=1_{(0,x)}(y) h(x-y,y).
\]
Consequently, we can find $t>0$ such that the operator $P(t)$ is partially integral \cite{mackeytyran08} and
the result follows from Theorem~\ref{thm:pr00}.
\end{proof}

We conclude this section with sufficient conditions for mean ergodicity of the transition operator $K$.
\begin{proposition}\label{t:eid} Let $K$ be a transition operator $K$ with a bounded kernel $k$.
Suppose that there exist a nonnegative measurable function $V\colon (0,\infty)\to [0,\infty)$ which is bounded
on bonded subsets of $(0,\infty)$ and constants $a,d>0$  such that
\begin{equation}\label{e:FL}
\int_{0}^\infty V(x)k(x,y)dx\le V(y)-1+a1_{(0,d)}(y),\quad y>0.
\end{equation}
Then the operator $K$ is mean ergodic on $L^1$.
\end{proposition}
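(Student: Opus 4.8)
The plan is to read \eqref{e:FL} as a Foster--Lyapunov drift condition for the Markov transition operator $K$ and to deduce that the Cesàro averages $A_nv=\frac1n\sum_{j=0}^{n-1}K^jv$ are relatively weakly compact in $L^1$; mean ergodicity then follows from the mean ergodic theorem. Since the set of $v\in L^1$ for which $\{A_nv\}_{n}$ converges is a closed subspace and the $A_n$ are contractions, it suffices to treat densities $v$ with bounded support, for which $\langle V,v\rangle:=\int V\,v\,dx<\infty$ because $V$ is bounded on the (bounded) support; such $v$ span a dense subspace. Fix such a $v$ and write $\mu_j=K^jv$.

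First I would exploit the drift in its dual form. Pairing \eqref{e:FL} with $\mu_j\ge 0$ gives
\[
\langle V,\mu_{j+1}\rangle=\int\Big(\int V(x)k(x,y)\,dx\Big)\mu_j(y)\,dy\le \langle V,\mu_j\rangle-1+a\int_{(0,d)}\mu_j\,dy .
\]
Telescoping over $j=0,\dots,n-1$ and using $\langle V,\mu_n\rangle\ge0$ yields the non-sweeping lower bound
\[
\frac1n\sum_{j=0}^{n-1}\int_{(0,d)}\mu_j\,dy\ \ge\ \frac1a\Big(1-\frac{\langle V,v\rangle}{n}\Big),
\]
so the averages retain mass at least $1/a$ on the finite-measure set $(0,d)$; in particular any weak limit of $\{A_nv\}$ will be nonzero. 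The boundedness of the kernel then handles uniform integrability and non-concentration: for $j\ge1$ one has $\|\mu_j\|_\infty\le\|k\|_\infty\|\mu_{j-1}\|_1\le\|k\|_\infty$, so that $\int_A A_nv\,dx\le \int_A v\,dx+\|k\|_\infty m(A)$ is uniformly small when $m(A)$ is small, and $\sup_n\int_{(0,\eta)}A_nv\,dx\to0$ as $\eta\to0$.

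The remaining, and principal, point is tightness at infinity: $\sup_n\int_{(R,\infty)}A_nv\,dx\to0$ as $R\to\infty$. This is exactly where the full force of \eqref{e:FL} is needed, and it is the main obstacle. Because the drift rate here is the constant $1$ rather than a function growing at infinity, one cannot simply bound $\langle V,A_nv\rangle$ uniformly in $n$ (the naive telescoped estimate degrades once $a>1$); instead one must use the interplay between the drift and the set $(0,d)$, on which $V$ is bounded and off which $V\ge1$ (since $\int V(x)k(x,y)\,dx\ge0$ forces $V(y)\ge1$ for $y\notin(0,d)$). Concretely I would run the positive-recurrence argument: \eqref{e:FL} controls the expected time the chain spends away from $(0,d)$ between successive visits, so the averaged mass sitting in $(R,\infty)$ is bounded uniformly in $n$ and tends to $0$ as $R\to\infty$. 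This is precisely the content packaged by Theorems~3.6 and~5.2 of~\cite{tyran09}, which I would invoke, with $(0,d)$ playing the role of the small set (made small by the bounded kernel $k$).

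Granting tightness, Dunford--Pettis gives that $\{A_nv\}$ is relatively weakly compact in $L^1$, and the mean ergodic theorem then shows that $A_nv$ converges in $L^1$ to a fixed point of $K$, necessarily nonzero by the lower bound above. Extending from the dense class of admissible $v$ to all of $L^1$ via the contraction property of the $A_n$ completes the proof. Alternatively, once a strictly positive fixed point of $K$ has been produced in this way, one may bypass the final step and simply quote the fact that a strictly positive fixed point forces mean ergodicity~\cite{kornfeldlin00}.
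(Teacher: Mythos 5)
Your high-level skeleton---reduce to densities $v$ of bounded support (so $\int V v\,dx<\infty$), telescope \eqref{e:FL} to get the non-sweeping bound $\frac1n\sum_{j<n}\int_{(0,d)}K^jv\,dx\ge \frac1a\bigl(1-\frac1n\int Vv\,dx\bigr)$, use $\|K^jv\|_\infty\le\|k\|_\infty$ for uniform integrability, then pass through weak compactness and the mean ergodic theorem---is sound and is in fact close in spirit to how the paper finishes. But there is a genuine gap at exactly the step you yourself flag as the principal one: tightness at infinity of the averages $A_nv$. You dispose of it by invoking Theorems~3.6 and~5.2 of \cite{tyran09}, and those results do not contain any such statement. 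They concern substochastic semigroups of piecewise deterministic Markov processes and relate mean ergodicity of the jump-chain operator $K$ to stochasticity of $\{P(t)\}_{t\ge0}$; this is what the paper cites them for in Proposition~\ref{p:2}, i.e.\ they are \emph{consumers} of mean ergodicity, not a source of drift-to-tightness estimates for a general kernel chain. Your fallback sketch (``\eqref{e:FL} controls the expected time the chain spends away from $(0,d)$ between successive visits'') also does not go through as stated: a regeneration-cycle computation of that kind presupposes $\psi$-irreducibility or Harris recurrence, which is not assumed here. Under \eqref{e:FL} alone the chain may split into several ergodic classes with several invariant measures, and since $V$ is not assumed to tend to infinity, even a uniform bound on $\int V A_nv\,dx$ (which, as you correctly observe, fails anyway once $a>1$) would not yield tightness.

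What the paper does at this point is invoke Theorem~1 and Lemma~1 of Tweedie \cite{tweedie01}, whose entire purpose is the possibly non-irreducible setting: under the drift condition \eqref{e:FL}, with $V$ bounded on bounded sets and the boundedness of $k$ making the family $\{\mathcal{K}(y,\cdot)\colon y\in(0,d)\}$ suitably uniformly countably additive, there exist finitely many invariant probability measures $\mu_1,\dots,\mu_N$ (each absolutely continuous, since any invariant measure of this kernel operator must be) and nonnegative weights $L_1,\dots,L_N$ with $\sum_i L_i(y)=1$ such that $\frac1n\sum_{j=1}^{n}\mathcal{K}^j(y,B)\to\sum_i L_i(y)\mu_i(B)$ for every $y$ and every Borel set $B$; in particular no mass escapes to infinity, which is precisely the content your argument is missing. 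Dominated convergence then gives setwise convergence of $\int_B A_nv\,dx$ for every $B$, and boundedness in $L^1$ plus the mean ergodic theorem upgrade this to norm convergence---after which no dense-class reduction is even needed. So your proof becomes essentially the paper's upon replacing the inapplicable citation with \cite{tweedie01}; as written, the decisive recurrence input is asserted rather than proved. A final minor point: your alternative ending via \cite{kornfeldlin00} requires a \emph{strictly positive} fixed point of $K$, which your construction does not supply---the limits you produce are merely nonzero, and when several ergodic classes coexist no strictly positive fixed point need exist at all.
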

\begin{proof}
Let $Z_n, n\ge 0,$ be a Markov chain with stochastic kernel $\mathcal{K}$ given by
\[
\mathcal{K}(y,B)=\int_{B}k(x,y)dx,\quad y>0, B\in \mathcal{B}((0,\infty)).
\]
Recall that a probability measure $\mu$ is invariant for the chain if and only if  the measure $\mu$ satisfies
the equation
\[
\mu(B)=\int_{0}^{\infty} \mathcal{K}(y,B)\mu(dy)
\]
for all Borel measurable sets $B$. We have
\[
\mu(B)=\int_{B}\int_{0}^\infty k(x,y)\mu(dy)dx.
\] Thus each invariant probability measure is absolutely continuous with
respect to the Lebesgue measure on $(0,\infty)$. Since $K$ is the transition operator corresponding to
$\mathcal{K}$, we have
\[
\int_B K^jv(x)dx=\int_{0}^\infty \mathcal{K}^j(y,B)v(y)dy,\quad B\in \mathcal{B}((0,\infty)),
\]
where $\mathcal{K}^1(y,B)=\mathcal{K}(y,B)$ and
\begin{equation*}
\mathcal{K}^{j}(y,B)=\int_0^\infty \mathcal{K}^{j-1}(z,B)\mathcal{K}(y,dz),\quad y>0,j\ge 2.
\end{equation*}
From Theorem 1 and Lemma 1 of \cite{tweedie01} it follows that there exist a finite number of invariant
probability measures $\mu_1,\ldots,\mu_N$ and a finite number of nonnegative functions $L_1,\ldots L_N$ such
that $\sum_{i=1}^NL_i(y)=1$ and
\begin{equation}\label{k:me}
\frac{1}{n}\sum_{j=1}^{n}\mathcal{K}^j(y,B)\to \sum_{i=1}^NL_i(y)\mu_i(B)
\end{equation}
for all $y$ and all Borel sets $B$. Let $v_1,\ldots,v_N$ be the densities of the invariant measures
$\mu_1,\ldots,\mu_N$.  Now let $v\in L^1$. From~\eqref{k:me} and the Lebesgue dominated convergence theorem it
follows that
\[
\lim_{n\to\infty}\int_B \frac{1}{n}\sum_{j=1}^nK^jv(x)dx=\int_{B}\sum_{i=1}^N\int_{0}^{\infty}L_i(y)v(y)dy
v_i(x)dx,
\]
for all Borel $B$. Moreover, the sequence $\frac{1}{n}\sum_{j=1}^nK^jv$ is bounded in $L^1$. Thus, it is weakly
convergent in $L^1$ and, by the mean ergodic theorem, it converges in $L^1$.  
\end{proof}

We now apply the last result to our transition operator $K$.

\begin{corollary}\label{cor:existinvK} Let $K$ be the transition operator as in \eqref{e:K} with  bounded $h$.
Suppose that the function
\[
m_1(y)=\int_{0}^\infty xh(x,y)dx,\quad y>0,
\]
is bounded on bounded subsets of $(0,\infty)$. If
\begin{equation}\label{e:eid}
\limsup_{y\to \infty}e^{Q(y)} \int_{0}^y \left(m_1(z)\frac{\varphi(z)}{\gamma(z)}-1\right)e^{-Q(z)}dz<0,
\end{equation}
then the operator $K$  is mean ergodic.
\end{corollary}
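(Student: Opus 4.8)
The plan is to verify the Foster--Lyapunov drift condition~\eqref{e:FL} of Proposition~\ref{t:eid} using the \emph{linear} test function $V(x)=x$ (suitably rescaled). Everything then reduces to computing the first moment $\int_0^\infty x\,k(x,y)\,dx$ of the transition kernel and recognizing that it reproduces exactly the integral appearing in hypothesis~\eqref{e:eid}. Before doing so I would record the elementary identity obtained from $Q'=-\varphi/\gamma$ and $Q(0)=+\infty$, namely
\[
e^{Q(y)}\int_0^y \frac{\varphi(z)}{\gamma(z)}e^{-Q(z)}\,dz = 1,\qquad y>0,
\]
since this is used twice. First, it shows that $K$ has a bounded kernel as Proposition~\ref{t:eid} requires: from~\eqref{e:k} and the bound $h\le H$ we get $k(x,y)\le H\,e^{Q(y)}\int_0^y \frac{\varphi(z)}{\gamma(z)}e^{-Q(z)}dz = H$.

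The heart of the argument is the moment computation. Starting from~\eqref{e:k}, I would apply Fubini (the indicator $1_{(0,x)}(z)$ turns the domain into $0<z<x$), substitute $w=x-z$, and use~\eqref{eq:h} together with the definition of $m_1$ to obtain
\[
\int_0^\infty x\,k(x,y)\,dx = e^{Q(y)}\int_0^y \frac{\varphi(z)}{\gamma(z)}e^{-Q(z)}\bigl(m_1(z)+z\bigr)\,dz.
\]
The term carrying the factor $z$ is then integrated by parts against $\frac{d}{dz}e^{-Q(z)}=\frac{\varphi(z)}{\gamma(z)}e^{-Q(z)}$; the boundary contribution at $0$ vanishes because $z\,e^{-Q(z)}\to 0$, the upper boundary term yields exactly $y$, and the leftover $-\int_0^y e^{-Q(z)}dz$ merges with the $m_1$--integral to produce
\[
\int_0^\infty x\,k(x,y)\,dx = y + e^{Q(y)}\int_0^y\Bigl(m_1(z)\tfrac{\varphi(z)}{\gamma(z)}-1\Bigr)e^{-Q(z)}\,dz =: y + \psi(y).
\]
I expect this exact cancellation---that integration by parts manufactures precisely the ``$-1$'' inside the integrand of~\eqref{e:eid}---to be the one nonobvious step; once spotted, the rest is routine.

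With this identity the conclusion follows quickly. Hypothesis~\eqref{e:eid} states $\limsup_{y\to\infty}\psi(y)<0$, so there exist $\varepsilon>0$ and $d>0$ with $\psi(y)\le-\varepsilon$ for all $y\ge d$. Taking $V(x)=x/\varepsilon$ gives $\int_0^\infty V(x)k(x,y)\,dx = V(y)+\psi(y)/\varepsilon$, and $\psi(y)/\varepsilon\le -1$ whenever $y\ge d$. On the bounded set $(0,d)$ I would check that $\psi$ is bounded above: since $m_1$ is bounded on bounded subsets, $m_1(z)+z\le M'$ for $z\in(0,d)$, and the same identity $e^{Q(y)}\int_0^y\frac{\varphi}{\gamma}e^{-Q}dz=1$ gives $\int_0^\infty x\,k(x,y)\,dx\le M'$, hence $\psi\le M'$ there. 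Choosing $a=1+M'/\varepsilon$ then yields $\int_0^\infty V(x)k(x,y)\,dx\le V(y)-1+a\,1_{(0,d)}(y)$, while $V(x)=x/\varepsilon$ is nonnegative and bounded on bounded sets. All hypotheses of Proposition~\ref{t:eid} are met, so $K$ is mean ergodic.
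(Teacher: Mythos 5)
Your proof is correct and takes essentially the same route as the paper's: the same bounded-kernel bound via the identity $e^{Q(y)}\int_0^y \frac{\varphi(z)}{\gamma(z)}e^{-Q(z)}dz=1$, the same first-moment computation for the test function $V(x)=x$ (up to a multiplicative constant), and the same integration-by-parts step that produces the integrand of \eqref{e:eid}, concluding via Proposition~\ref{t:eid}. If anything, you make the final Foster--Lyapunov bookkeeping (the rescaling by $\varepsilon$ and the choice of $a$ and $d$) more explicit than the paper, which merely remarks that the drift term is bounded on sets of the form $(0,d)$.
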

\begin{proof} Since $K$ has kernel $k$ given by \eqref{e:k}, we obtain
\[
k(x,y)\le c_1 e^{Q(y)}\int_{0}^y \frac{\varphi(z)}{\gamma(z)}e^{-Q(z)}dz=c_1,\quad x,y>0,
\]
where $c_1$ is the upper bound for $h$. By Proposition~\ref{t:eid} it is sufficient to check that the function
$V(x)=x$, up to a multiplicative constant, satisfies condition \eqref{e:FL}. We have
\[
\int_{z}^\infty V(x)h(x-z,z)dx=m_1(z)+z,\quad z>0.
\]
Thus
\[
\int_{0}^\infty V(x)k(x,y)dx=e^{Q(y)}\int_0^y (m_1(z)+z)\frac{\varphi(z)}{\gamma(z)}e^{-Q(z)}dz
\]
for all $y>0$. Since
\[
y=e^{Q(y)}\int_{0}^y z \frac{\varphi(z)}{\gamma(z)}e^{-Q(z)}dz+e^{Q(y)}\int_{0}^y e^{-Q(z)}dz,
\]
we obtain
\[
\int_{0}^\infty V(x)k(x,y)dx-V(y)=e^{Q(y)}\int_0^y \left(m_1(z)\frac{\varphi(z)}{\gamma(z)}-1\right)e^{-Q(z)}dz,
\]
which is a bounded function on sets of the form $(0,d)$. 
\end{proof}

\begin{remark}\em
Observe that if $Q(\infty)=0$ and
\[
\limsup_{z\to\infty}\frac{m_1(z)\varphi(z)}{\gamma(z)}<1
\]
then condition \eqref{e:eid} holds, since we can find $z_0>0$ and $\delta>0$ such that
\[
m_1(z)\frac{\varphi(z)}{\gamma(z)}-1\le -\delta \quad \text{for}\quad z\ge z_0,
\]
which implies that
\[
e^{Q(y)} \int_{y_0}^y \left(m_1(z)\frac{\varphi(z)}{\gamma(z)}-1\right)e^{-Q(z)}dz\le -a e^{Q(y)-Q(y_0)}(y-y_0)
\] for all $y\ge y_0\ge z_0$ with the right-hand side going to $-\infty$.

If $Q(\infty)=-\infty$ and
\[
\limsup_{z\to\infty}\left(m_1(z)-\frac{\gamma(z)}{\varphi(z)}\right)<0
\]
then condition \eqref{e:eid} holds as well  by d'Hospital's rule.
\end{remark}

\subsection{Exponentially distributed bursts}\label{ssec:exp-cont}
Experimental findings in populations of cells indicate that the burst size is often exponentially distributed \cite{Xie2008} so we now consider
\begin{equation}\label{c:exp}
h(x,y)=\frac{1}{b}e^{-x/b},\quad x,y>0,
\end{equation}
where $b>0$. The operator $K$ as defined in \eqref{e:K} then takes the form
\[
Kv(x)=\int_{0}^x \frac{1}{b}e^{-(x-z)/b}\frac{\varphi(z)}{\gamma(z)}e^{-Q(z)}\int_{z}^\infty v^*(y)e^{Q(y)}dydz.
\]
Note that the integrable function
\[
v^*(x)=e^{-x/b -Q(x)}
\]
 is a fixed point of the operator $K$, since
\[
Kv^*(x)=e^{-x/b}\int_{0}^x \frac{\varphi(z)}{\gamma(z)}e^{-Q(z)}dz=e^{-x/b-Q(x)}.
\]
Again, an explicit stationary solution was recently obtained in \cite{Aquino11}, and we establish convergence to this stationary state with the following result.

\begin{corollary}\label{cor:1} Assume  that conditions~\eqref{e:as} and \eqref{e:as1} hold and that
 $h$ is exponential as in \eqref{c:exp} with $b>0$.
Suppose that
\begin{equation}\label{e:int}
c:=\int_{0}^{\infty}\frac{1}{\gamma(x)}e^{-x/b -Q(x)}dx<\infty, \quad \int_{0}^{\infty}e^{-x/b -Q(x)}dx<\infty.
\end{equation}
Then the semigroup $\{P(t)\}_{t\ge 0}$ is stochastic and for each initial density
 $v$ we have
\begin{equation*}
\lim_{t\to\infty}\|P(t)v-u^{*}\|_1=0,
\end{equation*}
where
\begin{equation}\label{eq_analytic_density}
u_{*}(x)=\frac{1}{c\gamma(x)}e^{-x/b -Q(x)}
\end{equation}
is the unique stationary density of $\{P(t)\}_{t\ge 0}$.
\end{corollary}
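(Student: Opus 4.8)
The plan is to verify that the hypotheses of Theorem~\ref{thm:4} are satisfied, since Corollary~\ref{cor:1} is essentially the specialization of that theorem to the exponential burst kernel~\eqref{c:exp}. The preceding discussion has already done the crucial computational work by exhibiting the explicit fixed point $v^*(x)=e^{-x/b-Q(x)}$ of the operator $K$ and verifying $Kv^*=v^*$ directly. Thus the main task is to check the three conditions needed to invoke Theorem~\ref{thm:4}: that $v^*$ is an \emph{integrable} density, that $v^*>0$ almost everywhere, and that the normalizing constant $c$ is finite.

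First I would observe that positivity of $v^*$ is immediate: since $Q$ is real-valued and finite on $(0,\infty)$ under~\eqref{e:as} and~\eqref{e:as1}, we have $v^*(x)=e^{-x/b-Q(x)}>0$ for every $x>0$. Integrability of $v^*$ is exactly the second condition in~\eqref{e:int}, and finiteness of the constant $c$ defined in Theorem~\ref{thm:4} reduces, for the exponential kernel, to the first condition in~\eqref{e:int}; indeed, substituting the explicit $v^*$ into the formula for $c$ in Theorem~\ref{thm:4} and using $Kv^*=v^*$ collapses the double integral to $c=\int_0^\infty \gamma(x)^{-1}e^{-x/b-Q(x)}dx$, matching~\eqref{e:int}. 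This same collapse shows that the stationary density $u^*$ of Theorem~\ref{thm:4} reduces to the closed form~\eqref{eq_analytic_density}.

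With these three facts in hand, Theorem~\ref{thm:4} applies verbatim: the operator $K$ has an a.e.\ positive invariant density $v^*\in L^1$ with finite constant $c$, so the semigroup $\{P(t)\}_{t\ge 0}$ is stochastic, it has the unique stationary density $u^*$ given by~\eqref{eq_analytic_density}, and $\|P(t)v-u^*\|_1\to 0$ for every initial density $v$. I would state these verifications briefly and then simply cite Theorem~\ref{thm:4} for the conclusion.

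I do not expect a genuine obstacle here, since the heavy lifting—uniqueness of the invariant density (Proposition~\ref{p:uid}), the correspondence between fixed points of $K$ and invariant densities of the semigroup (Theorem~\ref{p:id}), and the partial-integral/asymptotic-stability argument (Theorem~\ref{thm:4} via Theorem~\ref{thm:pr00})—is already assembled in the general theory. The only point requiring mild care is confirming that the two integrability hypotheses in~\eqref{e:int} are precisely what Theorem~\ref{thm:4} demands for the exponential kernel, namely that the finiteness of $c$ in Theorem~\ref{thm:4} is equivalent to the first integral in~\eqref{e:int} and that $v^*\in L^1$ is the second; this is the step where I would be most careful to make the reduction explicit rather than leaving it implicit.
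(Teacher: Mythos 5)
Your proposal is correct and follows essentially the same route as the paper, which states Corollary~\ref{cor:1} as a direct application of Theorem~\ref{thm:4} after the preceding computation verifying $Kv^*=v^*$ for $v^*(x)=e^{-x/b-Q(x)}$. One cosmetic slip: the double integral in Theorem~\ref{thm:4} collapses by direct integration $\int_x^\infty e^{Q(y)}v^*(y)\,dy=\int_x^\infty e^{-y/b}\,dy=b\,e^{-x/b}$ (not by invoking $Kv^*=v^*$), yielding $b$ times the corollary's $c$ (and, strictly, one should normalize $v^*$ to a density first), but both constants are harmless since they are absorbed in the normalization of $u^*$.
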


\begin{remark}\em
 Note that if $Q(0)=\infty$ and
\[
\lim_{x\to\infty}\frac{\varphi(x)}{\gamma(x)}<\frac{1}{b},
\]
then the function  $x\mapsto e^{-x/b-Q(x)}$ is integrable on $(0,\infty)$. If, additionally,
\[
\liminf_{x\to\infty}\gamma(x)>0, \quad \lim_{x\to 0}\frac{e^{-Q(x)}}{\gamma(x)^r}<\infty, \quad \text{and}\quad
\int_0^\delta \gamma(x)^{r-1}dx<\infty
\]
for some $\delta,r>0$, then condition~\eqref{e:int} holds.  Furthermore, if it should happen that $b, \gamma(x)$
and $u^*(x)$ are known or can be approximated from data, then it is possible to estimate $\varphi(x)$ from
\begin{equation}\label{eq:pbinverse}
 \varphi(x)=\frac{1}{b}\gamma(x) +\frac{(\gamma(x)u^*(x))'}{u^*(x)}.
\end{equation}
Finally, note that if $\varphi$ is assumed to be bounded, then $u^*$ has an exponential tail, from which we can
deduce the parameter $b$.
\end{remark}
\begin{remark}[Bifurcation in the continuous case]\label{rem:bif_cont}\em
As in the discrete formulation of the model we can use relation \eqref{eq:pbinverse} to derive bifurcation properties of the stationary density  as a function of the relevant parameters. Namely, the number of extrema are linked to the number of solutions of 
$$\varphi(x)=\frac{\gamma(x)}{b}+\gamma'(x).$$
\end{remark}

In all examples below, we consider a linear degradation function,  $\gamma(x)=\gamma x$ with $\gamma>0$.

\begin{example}\em  Consider the function $\varphi$ of the form 
\begin{equation}\label{eq:genHill}
\varphi(x)=\lambda\frac{1+\Theta x^N}{\Lambda+\Delta x^N}
=\frac{\lambda}{\Delta}+\lambda\left(1-\frac{\Lambda}{\Delta}\right)\frac{1}{\Lambda+\Delta x^N},
\end{equation}
where $\lambda,\Lambda,\Delta, N$ are positive constants and $\Theta\ge 0$. Then
\[
Q(x)=c_1-\frac{\lambda}{\gamma\Lambda}\log(x)+\frac{\lambda}{N\Delta\gamma
}\left(\frac{\Delta}{\Lambda}-\Theta\right)\log (\Lambda+\Delta x^{N}),
\]
where $c_1$ is a constant. The stationary density is given by
\begin{equation}
\label{eq:solcont1} u^{*}(x)=(c\gamma)^{-1}e^{-x/b} x^{\lambda(\gamma\Lambda)^{-1} -1 } (\Lambda + \Delta x^N)^\theta,
\end{equation}
where 
\begin{equation}\label{eq:theta}
\theta=\frac{\lambda}{N\Delta \gamma}\left(\Theta-\frac{\Delta}{\Lambda}\right).
\end{equation}
This solution has been extensively studied in terms of numbers of maxima (P-bifurcation) in \cite{Mackey2011}
when $\Theta=1$. When $\Theta=\Delta=\Lambda=1$ the density $u^{*}$ is that of a gamma distribution, as
obtained in \cite{Friedman2006}.
\end{example}

\begin{example}\em
Consider the case of linear regulation with the function $\varphi$ of the form
$$\varphi(x)=\lambda_0+\lambda x, $$
where $\lambda_0,\lambda$ are nonnegative constants. If
\begin{eqnarray} \label{cond:cont1} \frac{1}{b}>
\frac{\lambda}{\gamma}\quad \text{and}\quad \lambda_0>0,
\end{eqnarray}
then $u^*$ is integrable and is given by the gamma distribution
\begin{equation}
\label{eq:solcont2}
u^{*}(x)=\frac{1}{\Gamma(\lambda_0/\gamma)}\left(\frac{1}{b}-\frac{\lambda}{\gamma}\right)^{\lambda_0/\gamma}
x^{\frac{\lambda_0}{\gamma}-1}e^{-(\frac{1}{b}-\frac{\lambda}{\gamma})x},
\end{equation}
which is a continuous approximation of the negative binomial distribution previously obtained, as in
\cite{Shahrezaei2008}.
\end{example}

\subsection{Other examples}\label{ssec:separable}

In this subsection we consider some more exactly solvable examples. The class of examples we provide generalizes
the exponentially distributed case of $h$. Let $\nu(y)$ be a positive, decreasing, and absolutely continuous
function on $(0,\infty)$ such that $\nu(y)\to 0$ as $y\to\infty$. Consider the function
\begin{equation}\label{e:sep}
h(x,y)=-\frac{\nu'(x+y)}{\nu(y)}, \quad y,z>0.
\end{equation}
Then for each $y$ the function $x\mapsto h(x,y)$ is a density and
\[
h(x-y,y)=-\frac{\nu'(x)}{\nu(y)}, \quad x>y.
\]
The operator $K$ can be thus rewritten as
\[
Kv(x)=-\int_0^x \frac{\nu'(x)}{\nu(z)}\frac{\varphi(z)}{\gamma(z)}e^{-Q(z)}\int_{z}^\infty v(y)e^{Q(y)}dydz.
\]
It is easily seen that if the function
\[
v^*(x)=-\nu'(x)e^{-Q(x)}
\]
is integrable then $Kv^*(x)=v^*(x)$, thus we obtain the following.
\begin{corollary}\label{cor:4} Let $h$ be as in \eqref{e:sep}. Suppose that
\[
c:=\int_0^\infty\frac{\nu(x)}{\gamma(x)}e^{-Q(x)}dx<\infty\quad\text{and}\quad -\int_{0}^\infty
\nu'(x)e^{-Q(x)}dx<\infty.
\]
Then the semigroup $\{P(t)\}_{t\ge 0}$ is stochastic and for each initial density
 $v$ we have
\begin{equation*}
\lim_{t\to\infty}\|P(t)v-u^{*}\|_1=0,
\end{equation*}
where
\begin{equation*}
u_{*}(x)=\frac{\nu(x)}{c\gamma(x)}e^{-Q(x)}
\end{equation*}
is the unique stationary density of $\{P(t)\}_{t\ge 0}$.
\end{corollary}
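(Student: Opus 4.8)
The plan is to read the corollary off from Theorem~\ref{thm:4}, applied to the fixed point of $K$ already exhibited in the paragraph preceding the statement. I would put
\[
v^*(x)=-\nu'(x)e^{-Q(x)}.
\]
Because $\nu$ is decreasing we have $-\nu'\ge 0$, and $e^{-Q}>0$, so $v^*\ge 0$; its $L^1$ norm is $\|v^*\|_1=-\int_0^\infty \nu'(x)e^{-Q(x)}\,dx$, which is finite by the second hypothesis, so $v^*$ is a nonnegative element of $L^1$. By the computation just above the corollary, $Kv^*=v^*$, so $v^*$ is an $L^1$ fixed point of $K$. This is exactly the input required by Theorem~\ref{thm:4}, once positivity a.e.\ is checked (see below).

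Next I would reduce the constant and the stationary density furnished by Theorem~\ref{thm:4} to the closed forms claimed here. The only computation needed is the inner integral: inserting $v^*(y)=-\nu'(y)e^{-Q(y)}$ the factors $e^{\pm Q(y)}$ cancel, and since $\nu(y)\to 0$ as $y\to\infty$,
\[
\int_x^\infty e^{Q(y)-Q(x)}v^*(y)\,dy=e^{-Q(x)}\int_x^\infty\big(-\nu'(y)\big)\,dy=\nu(x)e^{-Q(x)}.
\]
Substituting this into the formulas of Theorem~\ref{thm:4} gives $c=\int_0^\infty \tfrac{\nu(x)}{\gamma(x)}e^{-Q(x)}\,dx$ and $u^*(x)=\tfrac{\nu(x)}{c\gamma(x)}e^{-Q(x)}$, precisely the expressions in the statement; the first hypothesis is then just $c<\infty$. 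I would also note that $c$ and $u^*$ are unchanged if $v^*$ is multiplied by a positive constant, so there is no need to renormalise $v^*$ to a probability density before invoking the theorem, exactly as in the proof of Corollary~\ref{cor:1}.

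With these identifications the conclusions---that $\{P(t)\}_{t\ge 0}$ is stochastic, that $u^*$ is its unique stationary density, and that $\|P(t)v-u^*\|_1\to 0$ for every initial density $v$---follow immediately from Theorem~\ref{thm:4}. I expect the one genuinely non-mechanical point to be the hypothesis $v^*>0$ a.e.\ demanded by that theorem, which is what drives stochasticity (through Proposition~\ref{p:2} and the strictly-positive-fixed-point criterion for mean ergodicity). Since $v^*=-\nu' e^{-Q}$ with $e^{-Q}>0$, positivity a.e.\ is equivalent to $\nu'(x)<0$ for a.e.\ $x$; this is the reading of the decreasing hypothesis that I would adopt, and it is also what makes $h$ in \eqref{e:sep} a genuine positive burst density. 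I flag this as the delicate step because for a merely absolutely continuous $\nu$ strict decrease alone does not force $\nu'<0$ on a set of full measure, so the monotonicity assumption has to be used essentially here rather than only through the weaker sign condition $-\nu'\ge 0$.
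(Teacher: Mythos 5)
Your proof is correct and follows essentially the same route as the paper, which obtains Corollary~\ref{cor:4} precisely by noting (in the computation displayed just before the statement) that $v^*(x)=-\nu'(x)e^{-Q(x)}$ is an integrable fixed point of $K$ and then invoking Theorem~\ref{thm:4}, with the same cancellation $\int_x^\infty e^{Q(y)-Q(x)}v^*(y)\,dy=\nu(x)e^{-Q(x)}$ yielding the stated $c$ and $u^*$. Your closing observation---that the hypothesis $v^*>0$ a.e.\ in Theorem~\ref{thm:4} forces one to read ``decreasing'' as $\nu'<0$ a.e., which strict monotonicity of an absolutely continuous function alone does not guarantee---is a legitimate refinement of a point the paper leaves implicit.
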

\begin{remark}[Bifurcation in the continuous case--again]\em
As before (see Remark \ref{rem:bif_cont}), the number of extrema are linked to the number of solutions of 
$$\varphi(x)=-\frac{\nu'(x)}{\nu(x)}+\gamma'(x).$$
 Note that if it should happen that $\nu(x), \gamma(x)$ and $u^*(x)$ are known or can be approximated from data, then it is possible to estimate $\varphi(x)$ from
\begin{equation*}
 \varphi(x)=-\frac{\nu'(x)}{\nu(x)}\gamma(x) +\frac{(\gamma(x)u^*(x))'}{u^*(x)}.
\end{equation*}
If $m_1(x)=\int_{0}^\infty zh(z,x)dz<\infty$, then only the knowledge of $m_1(x)$ is sufficient as
\begin{equation*}
 -\frac{\nu'(x)}{\nu(x)}=\frac{1+m_1'(x)}{m_1(x)}.
\end{equation*}
\end{remark}
In the examples below, we take a linear degradation function $\gamma(x)=\gamma x$, with $\gamma>0$.
\begin{example}\em
Suppose that the function $\nu$ is of the form
\[
\nu(x)=(\alpha+x)^{-\beta}
\]
where $\alpha,\beta >0$ and that the function $\varphi$ is of the form \eqref{eq:genHill}. If
$$\beta >\frac{\lambda\Theta}{\gamma\Delta }+1$$
then the assumptions of Corollary~\ref{cor:4} are satisfied and  the stationary density $u^*$ 
is given by
\begin{equation*}
u^{*}(x)=\frac{1}{c\gamma}(\alpha +x)^{-\beta} x^{\lambda(\gamma\Lambda)^{-1} -1 } (\Lambda + \Delta x^N)^\theta,
\end{equation*}
where 
$\theta$ is as in \eqref{eq:theta}.
\end{example}

\begin{example}\em
Suppose that the function $\nu$ is of the form
\[
\nu(x)=e^{-(\alpha x+ \beta x^2)},
\]
where $\alpha, \beta >0$. Consider the case of linear regulation with the function $\varphi$ of the form
$$\varphi(x)=\lambda_0+\lambda_1 x, $$
where $\lambda_0,\lambda_1$ are nonnegative constants. If
\begin{eqnarray*}
\lambda_0>0,
\end{eqnarray*}
then $u^*$ is integrable and is given by
\begin{equation*}
u^{*}(x)=\frac{1}{c\gamma}x^{\frac{\lambda_0}{\gamma}-1}e^{-(\alpha -\frac{\lambda_1}{\gamma})x- \beta x^2}.
\end{equation*}
Consider the case of quadratic regulation with the function $\varphi$ of the form
$$\varphi(x)=\lambda_0+\lambda_1 x+\lambda_2 x^2, $$
where $\lambda_0,\lambda_1,\lambda_2$ are nonnegative constants.
If
\begin{eqnarray*}
\beta >\frac{\lambda_2}{2\gamma} \quad \text{and} \quad \lambda_0>0,
\end{eqnarray*}
then $u^*$ is integrable and is given by
\begin{equation*}
u^{*}(x)=\frac{1}{c\gamma}x^{\frac{\lambda_0}{\gamma}-1}e^{-(\alpha -\frac{\lambda_1}{\gamma}) x-( \beta
-\frac{\lambda_2}{2\gamma})x^2}.
\end{equation*}
\end{example}

\begin{example}\em
Suppose that the function $\nu$ is of the form
\[
\nu(x)=(\alpha -x)^\beta ,
\]
where $\alpha, \beta >0$, for all $x<\alpha$, and $\nu(x)=0$ for $x\geq \alpha$. Suppose the function $\varphi$
is given by \eqref{eq:genHill} where $\lambda,\Lambda,\Delta, N$ are positive constants and $\Theta\ge 0$. Then
the stationary density $u^*$ is integrable and is given by, for all $x< \alpha $,
\begin{equation*}
u^{*}(x)=\frac{1}{c\gamma}(\alpha -x)^{\beta } x^{\lambda(\gamma\Lambda)^{-1} -1 } (\Lambda + \Delta x^N)^\theta,
\end{equation*}
where 
$\theta$ is as in \eqref{eq:theta}. Convergence is obtained in the state space 
$(0,\alpha)$. 

\end{example}


\section{Conclusions and summary}\label{sec:conclusions}

In this paper we have presented both a discrete Markov process formulation as well as a continuous model
formulation for bursting gene expression.  Our development of the discrete model formulation in Section
\ref{ssec:gen-discrete} allowed us to prove a very general convergence result in Theorem \ref{t:exst} and then
to use that result to explore a variety of examples in Section \ref{ssec:disc-geo} when the burst amplitude is
geometrically distributed.  In Section \ref{sec:cont} we developed the analogous continuous model for bursting
expression.  Section \ref{ssec:gen-cont} contains the general development with Proposition \ref{p:uid} limiting
the number of invariant densities of the semigroup $\{P(t)\}_{t\ge 0}$, while Proposition \ref{p:2} uses mean
ergodicity of the transition operator $K$ to show that $\{P(t)\}_{t\ge 0}$ is stochastic.  Theorems \ref{p:id}
and \ref{thm:4} give criteria for a unique stationary density $u^*$ of $\{P(t)\}_{t\ge 0}$ and for convergence
to that stationary density.  In Section \ref{ssec:exp-cont} we have used these results in a number of specific
examples when the burst amplitudes are exponentially distributed--a situation often noted experimentally.
Section \ref{ssec:separable} concludes  with an examination of a generalization of the exponential distribution
of burst amplitudes.



\end{document}